\newcommand{\minimize}[1]{\displaystyle\minim_{#1}}
\newcommand{\minim}{\mathop{\hbox{\rm minimize}}}
\newcommand{\minimi}[1]{\displaystyle\mini_{#1}}
\newcommand{\mini}{\mathop{\hbox{\rm min}}}
\newcommand{\maximize}[1]{\displaystyle\maxim_{#1}}
\newcommand{\maxim}{\mathop{\hbox{\rm maximize}}}
\newcommand{\maximi}[1]{\displaystyle\maxi_{#1}}
\newcommand{\maxi}{\mathop{\hbox{\rm max}}}
\newcommand{\sbt}{\text{subject\ to}}
\newcommand{\norm}[1]{\left\Vert#1\right\Vert}
\newcommand{\Real}{\mathbb R}
\newcommand{\amin}{\mathop{\mbox{argmin}}}
\newtheorem{theorem}{Theorem}
\newtheorem{lemma}{Lemma}
\newtheorem{proposition}{Proposition}
\newtheorem{definition}{Definition}
\theoremstyle{definition}
\newtheorem{example}{Example}
\title{Multi-Agent Decentralized Network Interdiction Games}
\author{Harikrishnan Sreekumaran\thanks{School of Industrial
    Engineering, Purdue University, West Lafayette, Indiana, USA. Email:
    hsreekum@purdue.edu} \and  
  Ashish R.~Hota\thanks{School of Electrical and Computer Engineering, Purdue University,
  West Lafayette, Indiana, USA. Email: ahota@purdue.edu} \and
  Andrew L.~Liu\thanks{Corresponding author. School of Industrial
    Engineering, Purdue University, West Lafayette, Indiana, USA. Email:
    andrewliu@purdue.edu} \and  
  Nelson A.~Uhan\thanks{Mathematics Department, United States Naval
    Academy, Annapolis, Maryland, USA. Email: uhan@usna.edu}
\and
  Shreyas Sundaram\thanks{School of Electrical and Computer Engineering, Purdue University,
  West Lafayette, Indiana, USA. Email: sundara2@purdue.edu}
  }
\begin{document}
\maketitle

\begin{abstract}
In this work, we introduce \emph{decentralized network interdiction games},
which model the interactions among multiple interdictors with differing
objectives operating on a common network.  As a starting point,  we focus on
\emph{decentralized shortest path interdiction} (DSPI) games, where multiple
interdictors try to  increase the shortest path lengths of their own
adversaries, who all attempt to traverse a common network. We first establish
results regarding the existence of equilibria for DSPI games under both discrete
and continuous interdiction strategies. To compute such an equilibrium, we
present a reformulation of the DSPI games, which leads to a generalized Nash
equilibrium problem (GNEP) with non-shared constraints. While such a problem is
computationally challenging in general,  we show that under continuous
interdiction actions, a DSPI game can be formulated as a linear complementarity
problem and solved by Lemke's algorithm.  In addition, we present decentralized
heuristic algorithms based on best response dynamics for games under both
continuous and discrete interdiction strategies.  Finally, we establish
theoretical bounds on the worst-case efficiency loss of equilibria in DSPI
games, with such loss caused by the lack of coordination among noncooperative
interdictors, and use the decentralized algorithms to empirically study the
average-case efficiency loss.
\end{abstract}



\section{Introduction}

In an interdiction problem, an agent attempts to limit the actions of an
adversary operating on a network by intentionally disrupting certain components
of the network. Such problems are usually modeled in the framework of
leader-follower games and can be formulated as bilevel optimization problems.
Interdiction models have been used in various military and homeland security
applications, such as dismantling drug traffic networks
\cite{wood1993deterministic}, preventing nuclear smuggling
\cite{morton2007models} and planning tactical air strikes
\cite{ghare1971optimal}.  Interdiction models have also found applications in
other areas such as controlling the spread of pandemics
\cite{assimakopoulos1987network} and defending attacks on computer
communication networks \cite{smith_liu_algo_2008}.

Traditionally, interdiction problems have been analyzed from a
centralized perspective; namely, a single agent is assumed to
analyze, compute and implement interdiction strategies. In many
situations, however, it might be desirable and even necessary to
consider an interdiction problem from a decentralized perspective. For
instance, a supervising body, in control of multiple agents in a common
system, may assign each agent to an adversary of interest. Each agent is
then responsible for computing and implementing its own interdiction
strategy against the designated adversary. Other situations may involve
multiple independent agents, such as security agencies of different
countries, trying to achieve a common goal on a shared network.  Without
any coordination between the agents, one might expect that a
decentralized interdiction strategy may be inefficient compared to one
determined by a central decision maker. This paper is focused on
modeling and analyzing such settings and the inefficiencies that may
arise.

In this paper, we introduce \emph{decentralized network interdiction
  (DNI) games}, in which multiple agents with differing objectives are
interested in interdicting parts of a common network. We focus on a
specific class of these games, which we call \emph{decentralized shortest path
  interdiction (DSPI) games}. We investigate various properties of
equilibria in DSPI games, including their existence and uniqueness, and
propose algorithms to compute equilibria of these games.
Using these algorithms, we also conduct empirical studies on the efficiency loss 
of equilibria in the DSPI game compared to optimal
solutions obtained through centralized decision making.

Decentralized network interdiction games, as will be formally defined in Section
2, appear to be new. To the best of our knowledge, there has been no previous
research on such games. As a result, not much is known about the inefficiency of
equilibria for these games or intervention strategies to reduce such
inefficiencies. There has been a considerable amount of work, however, on
interdiction problems from a centralized decision-maker's perspective. As
mentioned earlier, interdiction problems have been studied in the context of
various military and security applications.  For extensive reviews of the
existing academic literature on interdiction problems, we refer the readers to
Church et al.  \cite{church_crit_infra_04} and Smith and Lim
\cite{smith_liu_algo_2008}.

There have also been many studies on the inefficiency of equilibria in other
game-theoretic settings. Most of the efforts have been focused on routing games
\cite{pigou1924economics, wardrop1952road}, in which
selfish agents route traffic through a congested network, and congestion games
\cite{rosenthal1973class}, a generalization of routing games. Some examples
include \cite{roughgarden2002bad, correa2004selfish,
awerbuch2005price, christodoulou2005price1,
cole2006much, suri2007selfish}. Several researchers
have also studied the inefficiency of equilibria in network formation games, in
which agents form a network subject to potentially conflicting connectivity
goals \cite{albers2006nash, anshelevich2008price, 
fabrikant2003network}. The inefficiency of equilibria has been
studied in other games as well, such as facility location games
\cite{vetta2002nash}, scheduling games \cite{koutsoupias1999worst}, and resource
allocation games \cite{johari2004efficiency, johari2009efficiency}. Almost all
of the work described above study the worst-case inefficiency of a given
equilibrium concept.  Although a few researchers have studied the average
inefficiency of equilibria, either theoretically or empirically, and have used
it as a basis to design interventions to reduce the inefficiency of equilibria
\cite{corbo2005price, thompson2009computational}, research in this
direction has not received much attention. 

One potential reason for the lack of attention paid to decentralized network
interdiction games may be that such games  often involve nondifferentiability,
as each interdictor's optimization problem usually entails a max-min type of
objective functions. Games involving nondifferentiable functions are generally
challenging, in terms of both theoretical analysis of their equilibria and
computing an equilibrium.  While in some cases (such as in the case of shortest
path interdiction), a smooth formulation (through total unimodularity and
duality) is possible, such a reformulation will lead the resulting network game
to the class of generalized Nash equilibrium problems (GNEPs), in which both the
agents' objective functions as well as their feasible action spaces depend on
other agents' actions. Although the conceptual framework of GNEPs can be dated
to Debreu \cite{debreu1952social}, rigorous theoretical and algorithmic
treatments of GNEPs only began in recent years \cite{gnep_survey}.  Several
techniques have been proposed to solve GNEPs, including penalty-based approaches
\cite{doi:10.1137/090749499, fukushima2011restricted},
variational-inequality-based approaches \cite{nabetani2011parametrized},
Newton's method \cite{dreves4}, projection methods \cite{zhang2010some}, and
relaxation approaches \cite{krawczyk2000relaxation1, uryas1994relaxation}. Most
of the work on GNEPs has focused on games with shared constraints due to their
tractability \cite{Facchinei2007159, Harker199181}. In such games, a set of
identical constraints appear in each agent's feasible action set. However, as
will be seen later, in a typical decentralized network interdiction game, the
constraints involving multiple agents' actions that appear in each agent's
action space are not identical. As a result, such games give rise to more
challenging instances of GNEPs. 

Based on the discussions above, the major contributions of this work are as
follows. First, we establish the existence of equilibria for DSPI games with
continuous interdiction. In DSPI games with discrete interdiction, the existence
of a pure strategy Nash equilibrium (PNE) is more subtle. We first demonstrate
that a PNE does not necessarily exist in general discrete DSPI games. However,
when all agents have the same source-target pairs (i.e., multiple agents try to
achieve a common goal independently), a PNE exists in discrete DSPI games.
Second, for DSPI games under continuous interdiction, we show that each agent's
optimization problem can be reformulated as a linear programming problem. As a
result, the equilibrium conditions of the game can be reformulated as a linear
complementarity problem with some favorable properties, allowing it to be solved
by the well-known Lemke algorithm.  For discrete DSPI games (and for continuous
games as well),  we present decentralized algorithms for finding an equilibrium,
based on the well-known best-response dynamics (or Gauss-Seidel iterative)
approach.  While such an approach is only a heuristic method in general,
convergence can be established for the special case when the agents have common
source-target pairs. For more general cases, we obtain encouraging empirical
results for the performance of the method on several classes of network
structures.  Third, in measuring the efficiency loss of DSPI games due to the
lack of coordination among noncooperative interdictors, as compared to a
centralized interdiction strategy (that is, a strategy implemented by a single
interdictor with respect to all the adversaries), we establish a theoretical
lower bound for the worst-case price of anarchy of DSPI games under continuous
interdiction. Such an efficiency loss measure, however, may be too conservative,
and we therefore use the decentralized algorithms to empirically quantify the
\emph{average-case} efficiency loss over some instances of DSPI games. These
results can help central authorities design mechanisms to reduce such efficiency
losses for practical instances.  

The remainder of this paper is organized as follows. We begin in Section 2 with
definitions and formulations of DNI games and DSPI games. In Section 3, we
present the main theoretical results of the paper, including an analysis of the
existence and uniqueness of equilibria in DSPI games. In Section 4 we
investigate algorithms for solving DSPI games. We describe a centralized
algorithm based on a linear complementarity formulation, as well as
decentralized algorithms for computing equilibria of DSPI games. We also give
results of our computational experiments with these algorithms for computing
equilibria as well as quantifying the price of anarchy for various instances.
Finally, in Section 5, we provide some concluding remarks.

\section{Decentralized Network Interdiction Games}


\subsection{Formulation}
\label{formulation}

Network interdiction problems involve interactions between two types of
parties -- adversaries and interdictors -- with conflicting interests.
An adversary operates on a network and attempts to optimize some
objective, such as the flow between two nodes. An interdictor tries to
limit an adversary's objective by changing elements of the network, such
as the arc capacities. Such interactions have historically been viewed from
a leader-follower-game perspective. The interdictor acts as the leader
and chooses an action while anticipating the adversary's potential
responses, while the adversary acts as the follower and makes a move
after observing the interdictor's actions. From the interdictor's
perspective, this captures the pessimistic viewpoint of guarding against
the worst possible result given its actions.

In this work, we consider strategic interactions among multiple
interdictors who operate on a common network. The interdictors may each
have their own adversary or have a common adversary. If there are
multiple adversaries, we assume there is no strategic interaction among
them. We also assume that the interdictors are allies in the sense that they
are not interested in deliberately impeding each other. 

Formally, we have a set $\mathcal{F} = \{1,\ldots,F\}$ of interdictors or
agents, who operate on a network $G=(V,A)$, where $V$ is the set of nodes and
$A$ is the set of arcs. Each agent's actions or decisions correspond to
interdicting each arc of the network with varying intensity: the decision
variables of agent $f \in \mathcal{F}$ are denoted by $x^f \in X^f\subset
\Real^{\vert A \vert}$, where $X^f$ is an abstract set that constrains agent
$f$'s decisions.  
For any agent $f \in \mathcal{F}$, let $x^{-f}$ denote the collection of all the other
agents' decision variables; that is, $x^{-f} = (x^1, \ldots, x^{f-1}, x^{f+1},
\ldots, x^F)$. The network obtained after every agent executes its decisions or
interdiction strategies is called the \emph{aftermath network}. The strategic
interaction between the agents occurs due to the fact that the properties of
each arc in the aftermath network are affected by the combined decisions of all
the agents.

In addition to the abstract constraint set $X^f$, we assume that each agent $f
\in \mathcal{F}$ faces a total interdiction budget of \textbf{$b^f$}.
The cost of interdicting an arc is linear in the intensity of
interdiction; in particular, agent~$f$'s cost of interdicting arc $(u,v)$ by
$x^f_{uv}$ units is $c^f_{uv} x^f_{uv}$.  Without loss of generality, 
we assume that $b^f > 0$ and
$c^f_{uv} > 0$ for each arc $(u,v) \in A$ and for each agent $f \in \mathcal{F}$.

\noindent The optimization problem for each agent $f \in \mathcal{F}$ 
is: 
\begin{equation}\label{dni}
  \begin{aligned}
    \maximize{x^f} \quad & \theta^f(x^f, x^{-f}) \\
    \sbt \quad & \sum_{(u,v) \in A} c^f_{uv} x^f_{uv} \leq b^f, \\
    & x^f \in X^f,
  \end{aligned}
\end{equation}
where the objective function $\theta^f$ is agent $f$'s \emph{obstruction
  function}, or measure of how much agent~$f$'s adversary has been
obstructed. Henceforth, we refer to the game in which each agent $f\in
\mathcal{F}$ solves the above optimization problem \eqref{dni} as a
\emph{decentralized network interdiction (DNI) game}. The obstruction
function~$\theta^f$ can capture various types of interdiction problems.
Typically $\theta^f$ is the (implicit) optimal value function of the
adversary's network optimization problem parametrized by the
agents' decisions, which usually minimizes flow cost or path length
subject to flow conservation, arc capacity and side constraints. 

Suppose that a central planner, with a comprehensive view of the network
and the agents' objectives, could pool the agents' interdiction
resources and determine an interdiction strategy that maximizes some
global measure of how much the agents' adversaries have been obstructed.
Let $\bm{\theta^c}(x^1, \ldots, x^F)$ represent the global obstruction
function for a given interdiction strategy $(x^1, \ldots, x^F)$. 
The central planner's problem corresponding to the DNI game \eqref{dni}
is then:
\begin{equation}\label{dni_centr_1}
  \begin{aligned}
    \maximize{x^1,\ \ldots,\ x^F} \quad & \bm{\theta^c}(x^1, \ldots, x^F) \\
    \sbt \quad & \sum_{f \in \mathcal{F}} \sum_{(u,v) \in A} c^f_{uv} x^f_{uv} \leq \sum_{f \in \mathcal{F}} b^f, \\
    &x^f \in X^f \quad \forall f \in \mathcal{F}.
  \end{aligned}
\end{equation}
We refer to (\ref{dni_centr_1}) as the centralized problem, and focus
primarily on when the global obstruction function is \emph{utilitarian};
that is, 
\begin{equation*} 
  \bm{\theta^c}(x^1, \ldots, x^F) := \sum_{f \in \mathcal{F}} \theta^f(x^f, x^{-f}). 
\end{equation*}
As mentioned earlier, one of the goals of this work is to quantify the
inefficiency of an equilibrium of a DNI game -- a decentralized solution
to problem~\eqref{dni} -- relative to a centrally planned optimal
solution -- an optimal solution to problem~\eqref{dni_centr_1}. A
commonly used measure of such inefficiency is the \emph{price of
  anarchy}. 
Formally speaking,  
let $\mathcal{N}_I$ be the set of all equilibria corresponding
to a specific instance $I$. (In the context of DNI
games, an instance consists of the network, obstruction functions,
interdiction budgets, and costs). 
For the same instance $I$,
let $(x^{1^*}, \ldots, x^{F^*})$ denote a global optimal solution to the
centralized problem \eqref{dni_centr_1}. Then the price of anarchy of the  instance $I$ is defined as 
\begin{equation}\label{PoA}
  p(I) := \maximi{(x^1_N, \ldots, x^F_N) \in \mathcal{N}_I} \frac{\bm{\theta^c}(x^{1^*}, \ldots x^{F^*})}{\bm{\theta^c}(x^1_N, \ldots, x^F_N)}.
\end{equation}
Let $\mathcal{I}$ be the set of all instances of a game. We assume implicitly that for all $I\in\mathcal{I}$, the set
$\mathcal{N}_I$ is nonempty and a global optimal solution to the
centralized problem exists. By convention, $p$ is set to 1 if the
worst equilibrium as well as the global optimal solution to the
centralized problem both have zero objective value. If the worst
equilibrium has a zero objective value while the global optimal value of
the centralized problem is nonzero, $p$ is set to be infinity. 
In addition to the price of anarchy for an instance of a game, 
we also define the worst-case price of anarchy over all instances of the game (denoted as $w.p.o.a$)  as follows:
\begin{equation}\label{wpoa}
  w.p.o.a:= \sup_{I \in \mathcal{I}} \  p(I). 
\end{equation}
Since we wish to study the properties of a class of games
  such as DNI games, rather than a particular instance of a game, we
  are more interested in the worst-case price of anarchy.   However,
  there are two major difficulties associated with such an efficiency
  measure.
First,  it is well-known that the worst-case price of anarchy may be a
very conservative measure of efficiency loss, since the worst case may
only happen with pathological instances. 
Second, explicit theoretical bounds on the worst-case price of anarchy
may be difficult to obtain for general classes of games. In fact most of
the related research has focused on identifying classes
of games where such bounds may be derived. In this work, we show how our proposed decentralized algorithms can be used to empirically study the \emph{average-case efficiency loss} (denoted by $a.e.l$). Let $\mathcal{I}'$ denote a finite set such that $\mathcal{I}' \subset \mathcal{I}$, and let $\vert \mathcal{I}' \vert$ denote the cardinality of the the set $\mathcal{I}'$. Then    
\vspace*{-3pt}
\begin{equation} 
  a.e.l(\mathcal{I}') := \displaystyle \frac{1}{\vert \mathcal{I}' \vert} \displaystyle \sum_{I \in \mathcal{I}'} \  p(I).
\end{equation}

\noindent In other words, the average-case efficiency loss is the average value of
$p(I)$ as defined in \eqref{PoA} over a set of sampled instances
$\mathcal{I}' \subset \mathcal{I}$ of a game.

As mentioned above, the generic form of problem \eqref{dni} can be used to
describe various network interdiction settings, such as maximum flow
interdiction. To start with models that are both theoretically and
computationally tractable, we focus on decentralized shortest-path interdiction
games, which we describe in detail next.

\subsubsection{Decentralized Shortest Path Interdiction Games}
\label{dspi}

As the name suggests, \emph{decentralized shortest path interdiction
  (DSPI)} games involve agents or interdictors whose adversaries 
are interested in the shortest path between source-target node pairs on
a network. Interdictors act in advance to increase the length of the
shortest path of their respective adversaries by interdicting (in
particular, lengthening) arcs on the network. 

To describe these games formally, we build upon the setup of the
general decentralized network interdiction game described in
Section~\ref{formulation}. Each agent $f \in \mathcal{F}$ has a target
node $t^f \in V$ that it wishes to protect from an adversary at source
node $s^f \in V$ by maximizing the length of the shortest path between
the two nodes. The agents achieve this goal by committing some resources
(e.g.~monetary spending) to increase the individual arc lengths on the
network: the decision variable $x^f_{uv}$ represents the contribution of
agent $f \in \mathcal{F}$ towards lengthening arc $(u,v) \in A$. The arc
length $d_{uv}(x^f, x^{-f})$ of arc $(u,v) \in A$ in the aftermath
network depends on the decisions of all the agents. 

We consider two types of interdiction. The first type of interdiction is
\emph{continuous}: in particular,
\begin{equation*}
  X^f := \{ x^f \in \mathbb{R}^{|A|} : x^f_{uv} \ge 0 \quad \forall (u,v) \in A
  \}.
\end{equation*}
The arc lengths after an interdiction strategy $(x^1, \ldots, x^F)$
has been executed are
\begin{equation}\label{ArcDistCont}
  d_{uv}(x^1, \ldots, x^F) = d_{uv}^0 + \sum_{f \in \mathcal{F}} x^f_{uv} \quad
  \forall (u,v) \in A,
\end{equation}\vspace*{-10pt}

\noindent where $x^f_{uv}$ captures how much agent $f$ extends the
length of arc~$(u,v)$. We assume that $d^0_{uv} > 0$
for all $(u,v) \in A$. 

The second type of interdiction is
\emph{discrete}: in this case, 
\vspace*{-5pt}
\begin{equation*}
  X^f := \{ x^f \in \mathbb{R}^{|A|} : x^f_{uv} \in  \{0,1\} \quad \forall (u,v) \in A \}
\end{equation*}
and the arc lengths in the aftermath network are
\begin{equation} \label{ArcDistDisc}
  d_{uv}(x^1, \ldots, x^F) = d_{uv}^0 + e_{uv} \maximi{f \in \mathcal{F}} x^f_{uv} \quad \forall (u,v) \in A,
\end{equation}
where $e_{uv} \in \Real_{\ge 0}$ is the fixed extension of arc $(u,v)$.
In other words, the length of an arc is extended by a fixed amount if at
least one agent decides to interdict it.

Let $P^f = \{p^f_1,p^f_2,\ldots,p^f_{k^f}\}$ be the set of $s^f-t^f$ paths available to agent $f\in\mathcal{F}$. The length of a path $p \in P^f$ is given by 
\begin{equation}\label{d_p}
d_p(x^1,\ldots,x^F) = \sum_{(u,v) \in p} d_{uv} (x^1,\ldots,x^F),
\end{equation}
where $d_{uv} (x^1,\ldots,x^F)$ is as defined in equation \eqref{ArcDistCont} for continuous interdiction, and as defined in \eqref{ArcDistDisc} for the discrete case.

The optimization problem for each interdicting agent $f \in \mathcal{F}$ is then: 
\begin{equation}\label{dni_paths}
  \begin{aligned}
    \maximize{x^f} \quad & \theta^f(x^f, x^{-f}) \equiv \min_{p \in P_f} d_p(x^f,x^{-f})  \\
    \text{subject to} \quad & \sum_{(u,v) \in A} c^f_{uv} x^f_{uv} \leq b^f, \\
    & x^f \in X^f.
  \end{aligned}
\end{equation}

Under continuous interdiction and the general assumption made ealier that $X_f$
is nonempty, convex and compact, the feasible strategy set for agent $f$, given
by $\{x^f \in X^f | \sum_{(u,v) \in A} c^f_{uv} x^f_{uv} \leq b^f \}$ is also
convex and compact. To rule out uninteresting cases, we also assume that the
feasible set for each agent is also nonempty (meaning that each agent has the
budget to at least interdict one arc). Moreover, given an $x^{-f}$, the
objective function in \eqref{dni_paths} is the minimum of a set of affine
functions of $x^{f}$, and therefore continuous in $x^{f}$. Thus, by Weirstrass's
extreme value theorem, each agent has an optimal strategy given the strategies
of the other agents.  Note, however, that the objective function in
\eqref{dni_paths} is not differentiable with respect to $x_f$ in general. 

For DSPI games with discrete interdiction, the feasible strategy set for each
agent is finite. Therefore an optimal solution to each agent's problem always
exists with a given $x^{-f}$. In the following section, we analyze the existence
and uniqueness of pure strategy Nash Equilibria for DSPI games, under both
continuous and discrete settings.

\section{Game Structure and Analysis}

\subsection{Existence of Equilibria}

We first consider the existence of a Nash equilibrium  of a DSPI game when interdiction decisions are continuous. The key is to show that the objective function in \eqref{dni_paths}, $\theta^f(x^f, x^{-f})$, is concave in $x^f$, despite the fact that it is not differentiable.
\begin{proposition}
Given that each agent $f\in\mathcal{F}$ solves the problem \eqref{dni_paths}, with $d_p(x^f,x^{-f}) $ defined as in \eqref{d_p} and \eqref{ArcDistCont}, and assume that the abstract set $X_f$ in \eqref{dni_paths} is nonempty, convex and compact for each $f \in \mathcal{F}$, 
the DSPI game under continuous interdiction has a pure strategy Nash equilibrium.
\end{proposition}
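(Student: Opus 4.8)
The plan is to establish existence via a standard fixed-point argument for games with continuous strategy spaces, namely the Debreu--Glicksberg--Fan theorem for concave games. This theorem guarantees a pure strategy Nash equilibrium provided that, for every agent $f \in \mathcal{F}$: (i) the feasible strategy set is nonempty, convex, and compact; (ii) the payoff $\theta^f$ is continuous in the joint strategy $(x^f, x^{-f})$; and (iii) $\theta^f$ is concave (quasi-concavity would suffice) in $x^f$ for each fixed $x^{-f}$. The proof therefore reduces to verifying these three hypotheses, after which the theorem applies directly.

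Hypothesis (i) is essentially already in place from the discussion preceding the statement. The feasible set $\{ x^f \in X^f : \sum_{(u,v) \in A} c^f_{uv} x^f_{uv} \le b^f \}$ is the intersection of the convex set $X^f$ with a halfspace, hence convex, and it is nonempty by assumption. The one point that genuinely needs care is compactness: under continuous interdiction the raw action set $X^f$ is the \emph{unbounded} nonnegative orthant, so compactness is not immediate. It is the budget constraint, together with the strict positivity of the costs $c^f_{uv}$ for every arc, that confines $x^f$ to a bounded region; the feasible set is then closed and bounded, and hence compact.

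Hypotheses (ii) and (iii) both follow from a single structural observation, which is the substantive content the statement flags. Fixing any profile and expanding \eqref{d_p} using \eqref{ArcDistCont}, each path length is $d_p(x^1,\ldots,x^F) = \sum_{(u,v) \in p} \bigl( d^0_{uv} + \sum_{g \in \mathcal{F}} x^g_{uv} \bigr)$, an affine function of all the decision variables, and in particular affine in $x^f$. The obstruction function $\theta^f \equiv \min_{p \in P_f} d_p$ is then the pointwise minimum of \emph{finitely many} affine functions. Such a minimum is jointly continuous, giving (ii), and concave in $x^f$, giving (iii), the latter because the infimum of any family of affine functions is concave. Crucially, this concavity argument invokes no derivatives whatsoever, which is exactly why the nondifferentiability highlighted in the statement is not an obstacle: concavity is an epigraph property that a min of affine functions satisfies automatically.

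With (i)--(iii) verified, the Debreu--Glicksberg--Fan theorem yields a pure strategy Nash equilibrium, completing the argument. I expect the concavity step, although presented as the key, to be routine once the min-of-affine structure is isolated; the only place demanding real attention is the compactness in (i), where the unbounded continuous action set must be tamed by the budget constraint and the positivity of the interdiction costs. All remaining steps are direct consequences of the affine arc-length model \eqref{ArcDistCont}.
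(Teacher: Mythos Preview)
Your proposal is correct and follows essentially the same approach as the paper: verify that the feasible sets are nonempty, convex, and compact, observe that each $\theta^f$ is a pointwise minimum of finitely many affine functions and hence concave in $x^f$, and then invoke a standard existence theorem for concave games (you cite Debreu--Glicksberg--Fan, the paper cites Rosen \cite{rosen1965existence}, but these are the same result for present purposes). The one minor difference is that you argue compactness via the budget constraint and positive costs, whereas the paper simply takes compactness of $X^f$ as a hypothesis of the proposition; your argument is a useful supplement given that the continuous-interdiction $X^f$ defined earlier in the paper is the unbounded orthant.
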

\begin{proof}
Based on the assumption, the feasible region in  \eqref{dni_paths} is nonempty, convex and compact.
With a fixed $x^{-f}$, the objective function of agent $f$ is the minimum of a finite set  of affine functions in $x^f$, and therefore, is concave with respect to $x^f$, by the well-known fact in convex analysis (Cf. \cite{boyd2004convex}). Consequently, the DSPI game belongs to the class of ``concave games," introduced in Rosen \cite{rosen1965existence}, and it is shown in \cite{rosen1965existence} that a pure-strategy Nash equilibrium always exists for a concave game. 
\end{proof}

Under discrete interdiction, the existence of a PNE is not always guaranteed when different interdictors are competing against different adversaries. We illustrate the nonexistence of PNE in Example \ref{example0} below.

\begin{example} \label{example0}
\rm{Consider the network given in Figure \ref{ex0}.}
\begin{figure}[htp]
\centering
\includegraphics[width=0.5\textwidth]{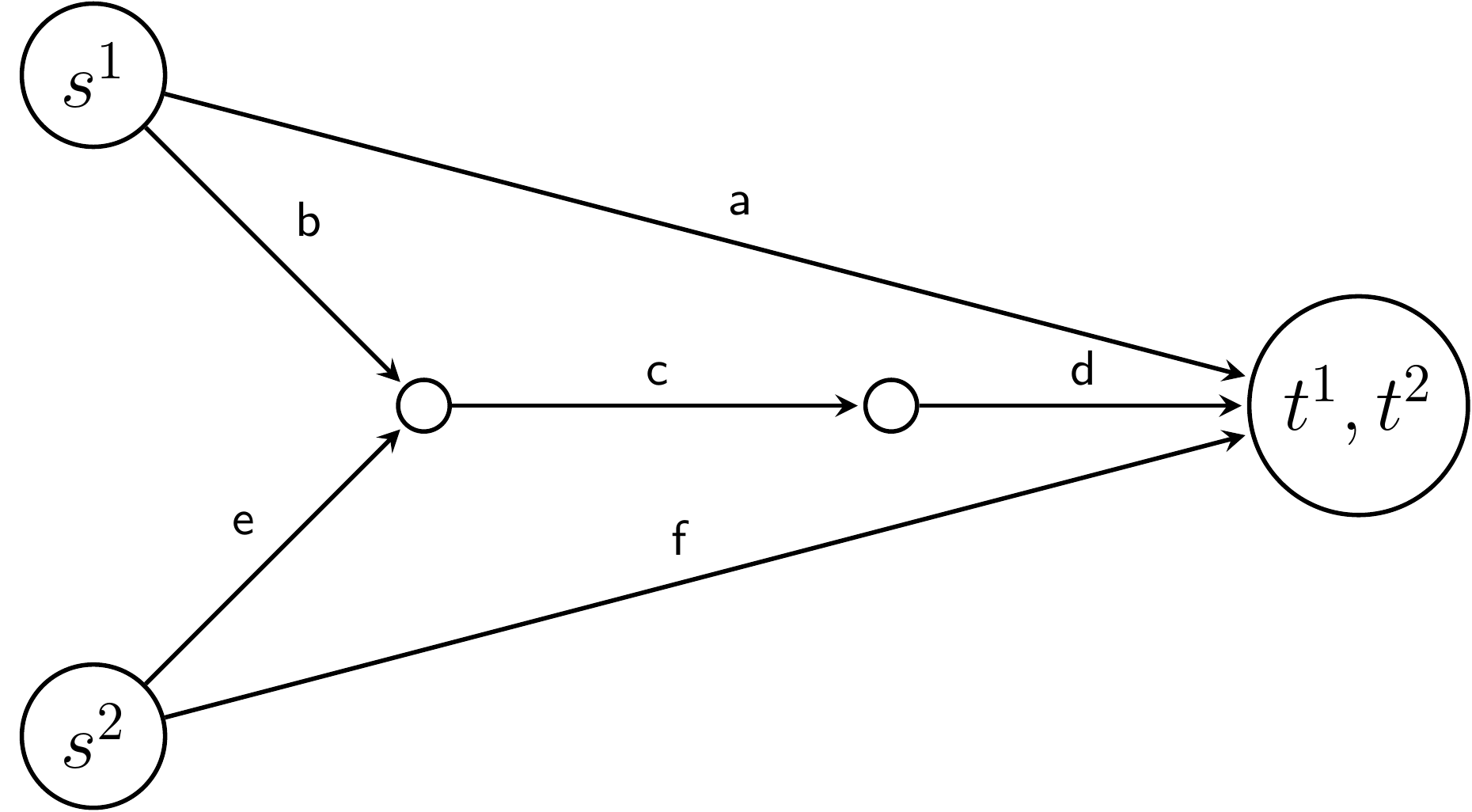}
\caption{Network topology for DSPI game in Example \ref{example0}.}
\label{ex0}
\end{figure}

\rm{In this game, there are two agents -- agent 1 and agent 2 -- who
are 
attempting to maximize the lengths of the $s^1$-$t^1$ paths and $s^2$-$t^2$ paths respectively.
Note that $t^1 = t^2$. The data for the problem, including initial arc lengths,
cost of interdiction and arc extensions are given below in Table
\ref{table:properties}.}

\begin{table}[h]
\centering{%
\begin{tabular}{| c | c | c | c | c |}
\hline
Arc tag & Initial length &  Arc extension & Cost to player $1$ & Cost to player $2$  \\\hline
a & 7 & 0.5 & 3 & 20  \\\hline
b & 0 & 2 & 6 & 20  \\\hline
c & 0 & 1.5 & 5 & 20  \\\hline
d & 0 & 6 & 15 & 15 \\\hline
e & 0 & 1 & 20 & 20  \\\hline
f & 1 & 6 & 15 & 15  \\\hline
\end{tabular}}
\caption{Network data for Example 1}
\label{table:properties}
\end{table}

\rm{Suppose $b^1 = 8$ and $b^2 = 15$. As a result, player $1$ can either
interdict the arcs $a,b$ and $c$ one at a time, or the arcs $a$ and $c$
simultaneously. Similarly, player $2$ can either interdict arc $d$ or arc $f$. }

\rm{Thus, player $1$ has four feasible pure strategies and player $2$ has two
feasible pure strategies. The strategy tuples along with the corresponding
pay-offs for each player are summarized in Table \ref{table:discretepotential}.
It is easy to verify that for any joint strategy profile, there is a player who
would prefer to deviate unilaterally. Therefore, this instance of the DSPI game
does not possess a NE.  }

\begin{table}[h]
\centering{%
\begin{tabular}{| c | c | c | }
\hline 
$P_1$/$P_2$ strategies & $d$ & $f$   \\\hline
$a$ & $6,1$ & $0,0$ \\\hline
$c$ & $7,1$ & $1.5,1.6$ \\\hline
$(a,c)$ & $7.5,1$ & $1.5,1.5$ \\\hline
$b$ & $7,1$ & $2,0$ \\\hline
\end{tabular}}
\caption{Pay-off combinations for Example \ref{example0}}
\label{table:discretepotential}
\end{table}%

\end{example}

In the previous example, the agents have a common target node, but different source nodes. However, in the class of games in which the interdictors have a common adversary, i.e., when each agent maximizes the shortest path between a common source-target pair, we can show that DSPI games under discrete interdiction possess a PNE.

Consider the DSPI game where each agent is trying to maximize the shortest path lengths between nodes $s$ and $t$. Since the objective function of each agent is the same, we can write the following centralized optimization problem to maximize the shortest $s-t$ path distance subject to the individual agents' budget constraints. Let $P^{st}$ be the set of $s-t$ paths in the network. The centralized optimization problem is: 
\begin{equation}\label{discrete_commonpath}
  \begin{aligned}
    \maximize{x} \quad & \min_{p \in P^{st}} d_p(x^1,x^2,\ldots,x^F)  \\
    \text{subject to} \quad & \sum_{(u,v) \in A} c^f_{uv} x^f_{uv} \leq b^f \quad \forall f \in \mathcal{F}, \\
    & x^f_{uv} \in \{0,1\} \quad \forall (u,v) \in A, f \in \mathcal{F}.
  \end{aligned}
\end{equation}
The feasible solution space of the above problem is finite under individual agents' budget constraints. Therefore, the centralized problem always has a maximum. Furthermore, the optimal solution to this problem is a PNE of the DSPI game as we show in the following result. 

\begin{proposition} \label{NE_existence_commonpaths}
Suppose the source and target for each agent in a DSPI problem are the same. Let $x^*$ denote the optimal solution of the centralized problem \eqref{discrete_commonpath}. Then $x^*$ is a PNE to the DSPI game under discrete interdiction.
\end{proposition}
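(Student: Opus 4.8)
The plan is to exploit the fact that, when every agent shares the common source--target pair $(s,t)$, the game is one of \emph{identical interests}: each agent's objective is literally the same function, and it coincides with the centralized objective. I would begin by recording this observation precisely. Since $s^f = s$ and $t^f = t$ for all $f$, each agent's path set is the common set $P^{st}$, so for any joint strategy $(x^1,\ldots,x^F)$ the individual objective in \eqref{dni_paths} satisfies
\[
\theta^f(x^f, x^{-f}) = \min_{p \in P^{st}} d_p(x^1,\ldots,x^F) = \bm{\theta^c}(x^1,\ldots,x^F),
\]
where the arc lengths $d_{uv}$ are given by \eqref{ArcDistDisc}. In particular, the shared term $\max_{g \in \mathcal{F}} x^g_{uv}$ makes the path lengths, and hence the objective value, identical for all agents, so the common obstruction value does not depend on which agent we credit it to.

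Next I would observe that the joint feasible region factors as a Cartesian product over agents: the only coupling in \eqref{discrete_commonpath} is through the objective, while the constraints $\sum_{(u,v)\in A} c^f_{uv} x^f_{uv} \le b^f$ and $x^f_{uv} \in \{0,1\}$ are separate for each $f$. Consequently, if agent $f$ replaces $x^{f*}$ by any $\hat x^f$ feasible for its own problem, the resulting profile $(\hat x^f, x^{-f*})$ is again feasible for the centralized problem \eqref{discrete_commonpath}.

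The verification of the equilibrium condition then follows directly from the global optimality of $x^*$. I would fix an arbitrary agent $f$ and an arbitrary feasible unilateral deviation $\hat x^f$. Because $x^*$ maximizes $\bm{\theta^c}$ over the product feasible set and $(\hat x^f, x^{-f*})$ lies in that set, we have $\bm{\theta^c}(\hat x^f, x^{-f*}) \le \bm{\theta^c}(x^*)$. Using the identity from the first step, this reads $\theta^f(\hat x^f, x^{-f*}) \le \theta^f(x^{f*}, x^{-f*})$, i.e.\ no deviation can strictly improve agent $f$'s payoff. Since $f$ was arbitrary, $x^*$ is simultaneously a best response for every agent, which is exactly the definition of a PNE.

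I do not expect a genuine obstacle here: the entire argument rests on recognizing the identical-interest (exact potential) structure, after which the result is the standard fact that a global maximizer of the common payoff over a product strategy space is a Nash equilibrium. The only point requiring a little care is confirming that the $\max_{g} x^g_{uv}$ coupling in \eqref{ArcDistDisc} does not break the claim that all agents share the same objective; it does not, since that term is symmetric in the agents and enters every agent's objective identically.
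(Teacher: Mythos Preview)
Your argument is correct and is essentially the same as the paper's: both rest on the observation that, with a common source--target pair, every agent's objective coincides with the centralized objective in \eqref{discrete_commonpath}, and the feasible set is a product over agents, so a profitable unilateral deviation would contradict the global optimality of $x^*$. The paper phrases this as a proof by contradiction while you give the direct version, but the content is identical. One small notational quibble: in the paper $\bm{\theta^c}$ is reserved for the utilitarian sum $\sum_f \theta^f$, not the single shortest-path objective of \eqref{discrete_commonpath}, so it would be cleaner to give that common objective a different name.
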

\begin{proof}
Assume the contrary, and suppose that there is an agent $h$ for whom there exists a
unilateral deviation $x^f$ that strictly increases the path distance $s-t$. By assumption, $x^h$ is feasible to the budgetary constraints for agent $h$. Therefore, $\bar{x} \equiv (x^h,\ {x^*}^{-h})$ is feasible to \eqref{discrete_commonpath} with a strictly larger objective value. Clearly this is a contradiction to the optimality of $x^*$ to \eqref{discrete_commonpath}.
\end{proof}

\subsubsection{Uniqueness of equilibria}
\label{uniqueness}

Establishing sufficient conditions for a DSPI game to have a unique equilibrium
is quite difficult. However, it is easy to find simple
instances of DSPI games for which multiple equilibria exist. 
We give two such examples below.

\begin{example} \label{example1}
\rm{Consider the following instance, based on the network in Figure
\ref{fig1}. There are 2 agents: agent~1 has an adversary with source
node 1 and target node 5; agent~2 has an adversary with source node
1 and target node 6. The initial arc lengths are 0, interdiction is
continuous, and the interdiction costs are the same for both agents and
are given in the arc labels in Figure~\ref{fig1}. Both agents have a
budget of 1.
\begin{figure}[htp]
\centering
\includegraphics[width=0.5\textwidth]{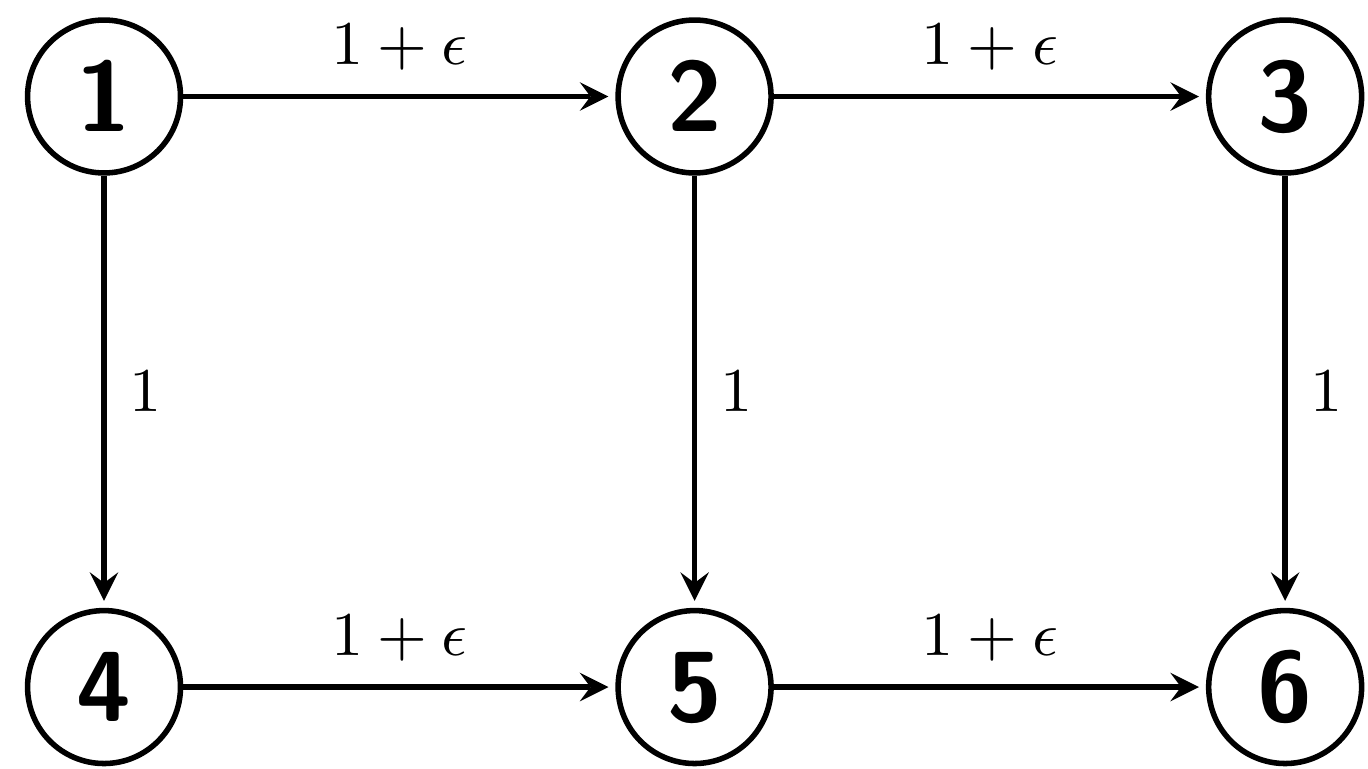}
\caption{Network topology for DSPI game in Example \ref{example1}.}
\label{fig1}
\end{figure}
Consider the case when $\epsilon = 2$.  In this case, it is straightforward to
see that the source-target path lengths for each agent must be equal at an
equilibrium: if the path lengths are unequal, an agent could improve its
objective function by equalizing the path lengths.  Therefore, in this example,
any combination of decision variables that results in a shortest path length of
$2/3$ for each agent will be a generalized Nash equilibrium, and there is a
continuum of such decision variable combinations. Indeed some of such equilibria
are given in Table~\ref{multi_eq_table} in Section~\ref{sec:numerical}. }
\end{example}

\begin{example} \label{example3} 
\rm{Under discrete interdiction on the same underlying network, an interesting
situation occurs when $\epsilon = 0$, the budget is 1, and the arc extensions
are all set to 1. In this case, 
an equilibrium occurs when the arcs $(1,4)$ and $(1,2)$ are interdicted by one agenteach. 
What is interesting however is that there exist equilibria that have
inferior objective values for both agents. Indeed, the extreme case of neither agent 
interdicting any arc can easily seen to be an equilibrium. This point in fact is
a social utility \emph{minimizer} over the set of feasible action combinations for
the two agents.}
\end{example}

\section{Computing a Nash Equilibrium}

In this section we discuss algorithms to compute equilibria of DSPI games. While
the general formulation with each agent solving \eqref{dni_paths} is sufficient
for showing existence of equilibria, such a formulation is not amenable for
computing an equilibrium mainly due to the `$\min$' function in the objective
function. In this section, using a well-known reformulation of shortest path
problems (through total unimodularity and linear programming duality), we
obtain a generalized Nash equilibrium problem of the DSPI game.  For
continuous DSPI games, we further show that such a GNEP can be written as a
linear complementarity problem (LCP) through the Karush-Kuhn-Tucker (KKT)
optimality conditions. We then show that the resulting LCP has favorable
properties, allowing the use of Lemke's pivoting algorithm with guaranteed
convergence to a solution (as opposed to a secondary ray).

We refer to the LCP approach as a centralized approach, in the sense that the game is purely viewed as a system of equilibrium conditions, and a general algorithm capable of solving the resulting system is applied. 
We also present decentralized algorithms based on
best-response dynamics, which are applicable to both continuous and discrete DSPI games. 
While not necessarily computationally more efficient, decentralized algorithms indeed have several
advantages over centralized algorithms. First, a centralized algorithm usually cannot solve discrete games, which can nevertheless be solved by a decentralized approach, aided by integer programming solvers. 
Second, a decentralized algorithm may provide
insight on how a particular equilibrium is achieved among agents' strategic
interactions. Such insight is particularly useful when multiple
equilibria exist, as is the case for many GNEPs. It is well-known (for
example, \cite{Myerson78}) that a game may possess unintuitive Nash
equilibria that would never realistically be the outcome of the game. 
Third, a decentralized algorithm can naturally lead to
multithreaded implementations that can take advantage of a high
performance computing environment. In addition, different threads in a
multithreaded implementation may be able to find different equilibria of
a game, making such an algorithm particularly suitable for
computationally quantifying the average efficiency loss of noncooperative strategies. 

In the following discussion, we first present the GNEP formulation of DSPI games under continuous interdiction. We then reformulate the GNEP as an LCP and analyze the properties of the LCP formulation. Finally we present the decentralized algorithms for both discrete and continuous DSPI games formulated as GNEPs.

\subsection{Dual GNEP formulation} 
In \eqref{dni_paths}, each agent's objective function $\theta^f(x^f, x^{-f})$ involves its adversary's shortest path problem, which can be written as an integer program as follows:
\begin{equation}\label{spi_primal}
  \theta^f(x^f, x^{-f}) = \left(
    \begin{aligned}
      \minimi{z^f} \quad & \sum_{(u,v) \in A} z^f_{uv} \  d_{uv}(x^f, x^{-f}) & & \\
      \text{s.t.} \quad & \sum_{v \in V} z^f_{uv} - \sum_{v \in V} z^f_{vu} = 
      \begin{cases}
        1 &\mbox{if } u = s^f \\
        0 & \mbox{if } u \neq s^f, t^f \\
        -1 & \mbox{if } u = t^f 
      \end{cases} \\
      & z^f_{uv} \in \{ 0,1 \} \quad \forall (u,v) \in A \\
    \end{aligned} \right),
\end{equation}
where the binary variables $z^f_{uv}$ represents
whether an arc $(u,v) \in A$ is in the shortest $s^f$-$t^f$ path. 
Although the inner minimization problem is an integer program with
binary variables, it is well known that the constraint matrix is totally
unimodular (e.g. \cite{schrijver1998theory}), rendering the integer
program equivalent to its linear programming relaxation. Therefore, once
the interdictors' variables $(x^1,\dots,x^F)$ are fixed, we can use
linear programming duality to transform the inner minimization problem
to a maximization problem\cite{israeli_wood_02} and reformulate
agent~$f$'s optimization problem~\eqref{dni} as: 
%
%
\begin{equation} \label{dualpotential3}
  \begin{aligned}
    \maximize{x^f,\ y^f} \quad & y^f_{t^f} - y^f_{s^f} \\
    \text{subject to} \quad & y^f_{v} - y^f_{u} \leq d_{uv}(x^f, x^{-f}) \quad \forall (u,v) \in A, \\[5pt]
    & \sum_{(u,v) \in A} c^f_{uv} x^f_{uv} \leq b^f, \\
    & x^f \in X^f, \\
    & y^f_v \geq 0 \quad \forall v \in V.
  \end{aligned}
\end{equation}
%
%
It is also well-known (see, for example, \cite{bazaraa2011linear,
  korte2012combinatorial}) that at optimality, the term $y_u^f - y^f_{s^f}$
is equal to the length of the shortest $s^f$-$u$ path in the aftermath
network. This is the reason why we are able to restrict the $y^f$ variables to
be non-negative. In addition, it also allows us to restrict the $y^f$
variables to be integral if the underlying network data is integral,
since at optimality all path lengths would also be integral. Moreover,
as we show below, it also allows us to bound the $y^f$ variables.

When interdiction is continuous, the largest possible length in the
aftermath network for any arc is bounded by the largest interdiction
possible on that arc. Keeping the budgetary constraints in mind, the maximum length of any arc $(u,v) \in A$ in the aftermath
network is bounded by 
\vspace*{-5pt}
\begin{equation*}
  d_{uv}^0 +  F \cdot \maximi{f \in \mathcal{F},\  (u,v) \in A}\displaystyle \left\{ \frac{b^f}{c^f_{uv}}\right\}.
\end{equation*}
Therefore, the length of any path in the aftermath network is bounded above by
\begin{equation*}\vspace*{-2pt}
  M = \sum_{(u,v) \in A} d^0_{uv} + \vert A \vert \ F \cdot \maximi{f \in \mathcal{F},\ a \in A}\displaystyle \left\{ \frac{b^f}{c^f_a}\right\}.
\end{equation*}\vspace*{-5pt}

\noindent On the other hand, when interdiction is discrete, the length of any
path in the aftermath network is bounded above by
$  M = \sum_{(u,v) \in A} (d^0_{uv} + e_{uv}).$
%

Since only the differences $y^f_v - y^f_u$ across arcs $(u,v)$ are
relevant to the formulation \eqref{dualpotential3}, we may always
replace $y^f_u$ by $y^f_u - y^f_{s^f}$ for each $u \in V$ to
obtain a feasible solution with equal objective value. 
Therefore we can then add the constraints $0 \leq y^f_u  \leq
M$ for all
$u \in V$ to the problem \eqref{dualpotential3} to obtain an equivalent
formulation of a DSPI game, where each agent~$f \in \mathcal{F}$ solves
the following problem:
\vspace*{-3pt}
\begin{equation} \label{dualpotential}
  \begin{aligned}
    \maximize{x^f,\ y^f} \quad & y^f_{t^f} - y^f_{s^f} \\
    \text{subject to} \quad & y^f_{v} - y^f_{u} \leq d_{uv}(x^f, x^{-f}) \quad \forall (u,v) \in A, \\
    & \sum_{(u,v) \in A} c^f_{uv} x^f_{uv} \leq b^f, \\
    & 0 \leq y^f_u \leq M \quad \forall u \in V, \\
    & x^f \in X^f.
  \end{aligned}
\end{equation}\vspace*{-3pt}

When analyzing the DSPI game from a centralized decision-making
perspective, we assume that the global obstruction function is
utilitarian, i.e., the sum of the shortest $s^f$-$t^f$ path lengths over
all the agents~$f \in \mathcal{F}$.  We also assume that the resources
are pooled among all the agents, resulting in a common budgetary
constraint. Thus the centralized problem for DSPI games can be given as follows:
\begin{equation} \label{dualcentrdspi}
  \begin{aligned}
    \maximize{x,\ y} \quad & \sum_{f \in \mathcal{F}} \big( y^f_{t^f} - y^f_{s^f} \big) \\
    \text{subject to} \quad & y^f_{v} - y^f_{u} \leq d_{uv}(x^f, x^{-f}) \quad \forall (u,v) \in A, f \in \mathcal{F}, \\
    & \sum_{f \in \mathcal{F}} \sum_{(u,v) \in A} c^f_{uv} x^f_{uv} \leq \sum_{f \in \mathcal{F}} b^f \\
    & 0 \leq y^f_u \leq M \quad \forall u \in V, f \in \mathcal{F}, \\
    & x^f \in X^f \quad \forall f \in \mathcal{F}.
  \end{aligned}
\end{equation}
Since $y^f$ is bounded for all $f \in \mathcal{F}$, the feasible set for
\eqref{dualcentrdspi} is a compact set. Thus a globally optimal solution exists
regardless of whether $x^f$ is continuous or discrete for all $f \in
\mathcal{F}$. In the continuous case, Weierstrass's extreme value theorem
applies since all the functions are continuous and the $x^f$ variables are
bounded due to the non-negativity and budgetary constraints.  In the discrete
case, there are only a finite number of values that the $x^f$ variables can
take. 

%

The formulation \eqref{dualpotential} gives us some insight into the
structure of strategic interactions among agents in a DSPI game. Note
that in formulation \eqref{dualpotential}, the objective function for
each agent $f \in \mathcal{F}$ only depends on variables indexed by $f$
(in particular, $y^f_{s^f}$ and $y^f_{t^f}$). However, the constraint
set for each agent $f$ is parametrized by other agents' variables
$x^{-f}$, which leads to a \emph{generalized Nash equilibrium
problem}. 

Formally speaking, consider a simultaneous-move game with
complete information.\footnote{A game is said to be
  \emph{simultaneous-move} if the agents must make their decisions
  without being aware of the other agents\rq{} decisions. A game has
  \emph{complete information} if the number of agents, their payoffs and their feasible action spaces are common knowledge to all the agents.} As before, let
$\mathcal{F} = \{1,\dots,F\}$ denote the set of agents. Let the 
scalar-valued function $\theta^f(\chi^f, \chi^{-f})$ be the utility function of agent~$f \in \mathcal{F}$, which is a function
of all the agents actions $(\chi^f, \chi^{-f})$. The
feasible action space of agent $f \in \mathcal{F}$ is a set-valued
mapping $\Xi^f(\chi^{-f})$ with dimension~$n_f$
(in a regular Nash equilibrium problem, each agent's
feasible action space is a fixed set). Let $n := \sum_{f\in\mathcal{F}}
n_f$. Then $\Xi^f(\cdot)$ is a mapping from $\Real^{(n-n_f)}$ to
$\Real^{n_f}$. Parametrized by the other agents' decisions~$\chi^{-f}$,
each agent~$f\in\mathcal{F}$ in a GNEP solves the following problem:
\begin{equation} \label{gnep1}
\begin{aligned}
\maximize{\chi^f} \quad & \theta^f(\chi^f, \chi^{-f}) \\
\sbt \quad & \chi^f \in \Xi^f(\chi^{-f}).
\end{aligned}
\end{equation}
It is straightforward to see how the DSPI game in \eqref{dualpotential}
translates into a GNEP problem: for all $f \in \mathcal{F}$,
\begin{equation}
  \label{dspi-gnep}
  \begin{gathered}
    \chi^f = (x^f, y^f),\\
    \theta^f(\chi^f, \chi^{-f}) =  y^f_{t^f} - y^f_{s^f},\\[1.5ex]
    \Xi^f(\chi^{-f}) = \left\{ \chi^f = (x^f, y^f) \;\left|\;
      \begin{aligned}
        &  y^f_{v} - y^f_{u} \leq d_{uv}(x^f, x^{-f}) \quad \forall (u,v) \in A,\\
        & \sum_{(u,v) \in A} c^f_{uv} x^f_{uv} \leq b^f,\\
        & 0 \leq y^f_u \leq M \quad \forall u \in V,\\
        & x^f \in X^f
      \end{aligned}
    \right. \right\}.
  \end{gathered}
\end{equation}
Note that $\chi = (\chi^1,\dots,\chi^F) \in \Real^n$, where $n = F
\left(\vert V \vert + \vert A \vert \right)$. 


\sloppypar To formally define a Nash equilibrium to a GNEP, we let
$\Omega(\chi)$ denote the Cartesian product of the feasible sets of each
agent corresponding to decisions $\chi = (\chi^1, \ldots, \chi^F)$; that
is, 
\begin{equation}
\Omega(\chi) := \Xi^1(\chi^{-1}) \times \Xi^2(\chi^{-2}) \times \dots \times \Xi^F(\chi^{-F}).
\end{equation}
For a simultaenous-move GNEP with each agent solving
problem~\eqref{gnep1}, a generalized Nash equilibrium is defined as
follows: 

\begin{definition}
  A vector $\chi_N =  (\chi^1_N, \ldots, \chi^F_N) \in \Omega(\chi_N)$
  is a \emph{pure-strategy generalized Nash equilibrium (PGNE)} if for
  each agent~$f \in \mathcal{F} $,
  \begin{equation} \label{gne}
    \theta^f(\chi_N^f,\ \chi_N^{-f})  \geq   \theta^f(\chi^f,\ \chi_N^{-f}), \quad \forall \ \chi^f \in \Xi^f(\chi_N^{-f}).
  \end{equation}
\end{definition}
\hspace*{-8pt} Based on the above definitions and discussions, it is easy to see that if $(x, y)$ is an equilibrium to a DSPI game formulated as a GNEP using the primal-dual formulation, 
then $x$ is must be an equilibrium to the DSPI game using only the primal formulation. 
Such a relationship is formally stated below.
\begin{proposition} \label{GNEP_equivalence}
Suppose that $\chi = (x,y) \in \Real^{F \times (|A| + |V|)}$ is a PGNE to the GNEP where each agent solves \eqref{dspi-gnep}. Then $x$ is a PNE to the DSPI game where each agent solves \eqref{dni_paths}.\hfill$\Box$
\end{proposition}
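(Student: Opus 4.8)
The plan is to exploit the fact that the dual potential formulation \eqref{dualpotential} was constructed precisely so that, for any fixed interdiction profile, the inner maximization over the potentials $y^f$ reproduces agent~$f$'s shortest-path objective $\theta^f$. The argument then reduces to transferring the best-response inequality \eqref{gne} from the lifted variables $(x^f,y^f)$ back to the $x^f$ variables alone.

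The key lemma I would establish first is a \emph{pointwise} duality statement: for every agent $f\in\mathcal{F}$ and every fixed profile $(x^f,x^{-f})$ with $x^f$ feasible,
\begin{equation*}
  \theta^f(x^f,x^{-f}) = \maximi{y^f}\set{\,y^f_{t^f}-y^f_{s^f} : y^f_v - y^f_u \le d_{uv}(x^f,x^{-f})\ \forall (u,v)\in A,\ 0\le y^f_u\le M\,}.
\end{equation*}
This is exactly the shortest-path LP-duality relation already used to pass from \eqref{dni_paths} to \eqref{dualpotential}, with two points that must be checked carefully. First, the equality must hold for an \emph{arbitrary} feasible $x^f$, not merely at agent~$f$'s own optimum; this is immediate because the relation is the LP dual of the shortest-path problem on the aftermath network determined by $(x^f,x^{-f})$, and the total unimodularity argument makes the integer program coincide with its relaxation for each fixed profile. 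Second, I must confirm that the added bound constraints $0\le y^f_u\le M$ do not cut off the optimum: using the normalization $y^f_{s^f}=0$, every node potential equal to its $s^f$-distance lies in $[0,M]$ by the choice of $M$, so the bounds are nonbinding and the maximum is unchanged.

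With this lemma in hand, I would argue as follows. Let $\chi=(x,y)$ be a PGNE of the GNEP \eqref{dspi-gnep}, and note that $\Xi^f(\chi^{-f})$ depends on $\chi^{-f}$ only through $x^{-f}$, since the aftermath arc lengths $d_{uv}$ involve no $y$-variables. Fix an agent $f$ and an arbitrary primal deviation $\hat x^f$ feasible for \eqref{dni_paths}. By the lemma there is a potential $\hat y^f$, feasible for the aftermath network induced by $(\hat x^f,x^{-f})$, with $\hat y^f_{t^f}-\hat y^f_{s^f}=\theta^f(\hat x^f,x^{-f})$; hence $(\hat x^f,\hat y^f)\in\Xi^f(\chi^{-f})$. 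Applying the equilibrium inequality \eqref{gne} to this point gives $y^f_{t^f}-y^f_{s^f}\ge \hat y^f_{t^f}-\hat y^f_{s^f}=\theta^f(\hat x^f,x^{-f})$. On the other hand, the incumbent $y^f$ is itself a feasible point of the maximization for $(x^f,x^{-f})$, so the lemma yields $\theta^f(x^f,x^{-f})\ge y^f_{t^f}-y^f_{s^f}$. Chaining the two inequalities produces $\theta^f(x^f,x^{-f})\ge\theta^f(\hat x^f,x^{-f})$ for every feasible $\hat x^f$, which is precisely the statement that $x^f$ is a best response to $x^{-f}$ in the primal game; since $f$ was arbitrary, $x$ is a PNE of \eqref{dni_paths}.

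I expect the only real obstacle to be the pointwise duality lemma, and within it the verification that the bounds $0\le y^f_u\le M$ remain slack for arbitrary feasible $x^f$ (and hence for aftermath arc lengths possibly larger than those realized at equilibrium). Everything after the lemma is a short two-sided sandwich using the single equilibrium inequality \eqref{gne}, and requires no further structure of the network.
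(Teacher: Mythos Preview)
Your proposal is correct; the paper itself does not supply a proof of this proposition (it is stated with a terminal $\Box$ and prefaced only by ``it is easy to see''), so your argument is exactly the natural expansion of what the authors leave implicit. The two-sided sandwich you describe---upper-bounding the equilibrium objective by $\theta^f(x^f,x^{-f})$ via feasibility of the incumbent $y^f$, and lower-bounding it by $\theta^f(\hat x^f,x^{-f})$ via the lifted deviation $(\hat x^f,\hat y^f)$---is the expected route, and your care with the pointwise duality lemma (in particular that the box constraints on $y^f$ are slack for every feasible profile, not just at equilibrium) is appropriate and not something the paper spells out.
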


For the remainder of the paper, we will mainly use the GNEP formulation, as our focus is shifting from establishing theoretical properties of DSPI equilibria to computing such an equilibrium. For continuous interdiction, the GNEP formulation can be further reformulated as an LCP, as we show below. On the other hand, under discrete interdiction, we show that certain classes of DSPI games admit provably convergent decentralized algorithms. In this case, we sequentially solve agents' problems \eqref{dspi-gnep} using an integer programming solver. 

\subsection{Linear Complementarity Formulation} \label{subsec:LCP}

Before presenting the LCP formulation for the DSPI game, we introduce
some basic notation and definitions. Formally, given a vector $q \in
\Real^d$ and a matrix $M \in \Real^{d \times d}$, a linear
complementarity problem LCP$(q,M)$ consists of finding a decision
variable vector $w \in \Real^d$ such that 
\begin{alignat}{2}
w & \geq  0, \label{lcpfeas1}\\
q + M w & \geq  0, \label{lcpfeas2}\\
w^T (q + M w) & =  0. \label{lcpcomp}
\end{alignat}
The LCP$(q,M)$ is said to be feasible if there exists a $w \in \Real^d$ that
satisfies \eqref{lcpfeas1} and \eqref{lcpfeas2}. Any $w$ satisfying
\eqref{lcpcomp} is called complementary. If $w$ is both feasible and
complementary, it is called a \emph{solution} of the LCP, and the set of such solutions 
is denoted by $ \text{SOL}(q,M)$. The LCP is said to be solvable if it has a solution. A
thorough exposition of the theory and algorithms for LCPs can be found
in \cite{cottle2009linear}.

Consider now the DSPI game with continuous interdiction, introduced in
Section~\ref{dspi}, where agent $f$'s optimization problem is given
in~\eqref{dualpotential3}. When the interdiction decisions of the agents $f' \neq
f$ are fixed, \eqref{dualpotential3} is a linear
program (LP).  In this case, the KKT conditions are both necessary and
sufficient for a given feasible solution to be optimal. 

In order to present the LCP reformulation in a more compact form, 
we introduce the following notation. Let $|V| = n$ and $|A| = m$. Denote by
$\mathcal{G}$ the arc-node incidence matrix of the graph $G$. 
Further let $\mathcal{I}$ denote an identity matrix, and $\mathbf{0}$ 
be a vector or a matrix of all zeros, of appropriate dimensions, respectively. 
The objective function coefficients for the LP~\eqref{dualpotential3},
denoted by $\phi^f \in \Real^{m+n}$ can be given as follows:
\begin{equation*}
\phi^f = \begin{bmatrix} \mathbf{0}_m \\ \nu^f \end{bmatrix}, \quad \text{where }
\quad \nu^f = \begin{cases} 1 &\mbox{if } u = s^f \\
        0 & \mbox{if } u \neq s^f, t^f \\
        -1 & \mbox{if } u = t^f
      \end{cases}.
\end{equation*}
The right hand sides for the constraints are denoted
as the vector $r^f(x^{-f}) \in \Real^{m+1}$:
{\renewcommand{\arraystretch}{1.2}
\begin{equation*} 
r^f(x^{-f}) = \begin{bmatrix} - d^0 \\ -b^f \end{bmatrix} -
\sum\limits_{\substack{f' \in \mathcal{F} \\ f' \neq f}} 
 \left[ \begin{array}{c|c}
 	\mathcal{I}_m & \mathbf{0}_{m \times n} \\ \hline
 	\mathbf{0}_m^T & \mathbf{0}_n^T
 \end{array} \right]	
 \begin{bmatrix} x^{f'} \\ y^{f'} \end{bmatrix}.
\end{equation*}}
The constraint matrix itself, denoted as $A^f \in \Real^{(m+1) \times (m+n)}$,
is 
{\renewcommand{\arraystretch}{1.3}
\begin{equation*} 
  A^f = \left[ \begin{array}{c|c}
    \mathcal{I}_m & \mathcal{G} \\ \hline
	-{c^f}^T & \mathbf{0}_{n}^T \end{array} 
  \right]. 
\end{equation*}}
Using this notation, the LP~\eqref{dualpotential3} can be restated as
follows:
\begin{equation} \label{dspi_compact}
\begin{array}{ll}
\minimize{x^f, y^f} & {\phi^f}^T \begin{bmatrix} x^f \\ y^f \end{bmatrix} \vspace{2 mm}\\
\text{subject to} & A^f \begin{bmatrix} x^f \\ y^f \end{bmatrix} \geq 
  r^f(x^{-f}), \vspace{3 mm}\\
  & \begin{bmatrix} x^f \\ y^f \end{bmatrix} \geq 0.
\end{array} 
\end{equation}
Let the dual variables for the LP \eqref{dspi_compact} be $(\lambda^f,
\beta^f, \upsilon^f)$, where $\lambda^f$ are the multipliers for the arc
potential constraints, $\beta^f$ the multiplier for the budgetary
constraint and $\upsilon^f$ the multipliers for the non-negativity
constraints. The KKT conditions for \eqref{dspi_compact} are given by
the following system.
\begin{equation} \label{KKT}
\begin{array}{rcl}
r^f(x^{-f}) \leq A^f  \begin{bmatrix} x^f \\ y^f \end{bmatrix}
	& \perp & \begin{bmatrix} \lambda^f \\ \beta^f \end{bmatrix} \geq 0, \vspace{3 mm}\\
0 \leq \begin{bmatrix} x^f \\ y^f \end{bmatrix} & \perp & \upsilon^f \geq 0, \\
\phi^f - {A^f}^T \begin{bmatrix} \lambda^f \\ \beta^f \end{bmatrix} - \upsilon^f & = & 0.
\end{array} 
\end{equation}
The KKT system~\eqref{KKT} can be rewritten in the following form:
\begin{equation} \label{KKT1}
\begin{array}{lll}
\upsilon^f = \phi^f - {A^f}^T \begin{bmatrix} \lambda^f \\ \beta^f \end{bmatrix} \geq 0, & \begin{bmatrix} x^f \\ y^f \end{bmatrix} \geq 0, & \begin{bmatrix} x^f \\ y^f \end{bmatrix}^T \upsilon^f = 0, \vspace{3 mm}\\
t^f = - r^f(x^{-f}) + A^f \begin{bmatrix} x^f \\ y^f \end{bmatrix} \geq 0, & \begin{bmatrix} \lambda^f \\ \beta^f \end{bmatrix} \geq 0, & {t^f}^T \begin{bmatrix} \lambda^f \\ \beta^f \end{bmatrix} = 0.
\end{array}
\end{equation}
In this form, it is easy to recognize that for a fixed value of
$x^{-f}$, the KKT system is equivalent to the LCP$(q^f(x^{-f}), M^f)$
where
\begin{equation} \label{lcpf}
  q^f(x^{-f}) = \begin{bmatrix} \phi^f \\ -r^f(x^{-f}) \end{bmatrix} \quad
    \text{ and } \quad M^f = \left[ \begin{array}{c|c}
    \mathbf{0}_{(m+n)\times(m+n)} & -{A^f}^T \\ \hline  A^f &
    \mathbf{0}_{(m+1) \times (m+1)} \end{array} \right].
\end{equation}
The decision variable vector for the LCP is the vector of combined
decision variables $ w^{f} := (x^f, y^f, \lambda^f, \beta^f)^T$. 
%
%
Each agent's KKT system~\eqref{KKT1} is parametrized by the collective
decisions of other agents. Now by stacking all agents' KKT systems
together, the resulting model is itself an LCP, which can be seen from
the following algebraic manipulation. 

First consider the
following system obtained from \eqref{KKT1} by expanding $r^f(x^{-f})$. 
{\renewcommand{\arraystretch}{1.2}
\begin{equation} \label{KKT2}
\begin{array}{lll}
\upsilon^f = \phi^f - {A^f}^T \begin{bmatrix} \lambda^f \\ \beta^f \end{bmatrix} \geq 0, & \begin{bmatrix} x^f \\ y^f \end{bmatrix} \geq 0, & \begin{bmatrix} x^f \\ y^f \end{bmatrix}^T \upsilon^f = 0, \vspace{3 mm}\\
t^f = \begin{bmatrix}  d^0 \\ b^f
\end{bmatrix} + A^f \begin{bmatrix} x^f \\ y^f \end{bmatrix} + 
\sum\limits_{\substack{f' \in \mathcal{F} \\ f' \neq f}} 
 \left[ \begin{array}{c|c}
 	\mathcal{I}_m & \mathbf{0}_{m \times n} \\ \hline
 	\mathbf{0}_m^T & \mathbf{0}_n^T
 \end{array} \right]	
 \begin{bmatrix} x^{f'} \\ y^{f'} \end{bmatrix} \geq 0, & \begin{bmatrix} \lambda^f \\ \beta^f \end{bmatrix} \geq 0, & {t^f}^T \begin{bmatrix} \lambda^f \\ \beta^f \end{bmatrix} = 0.
\end{array} \vspace*{-10pt}
\end{equation}}

\noindent Now introduce a matrix $\bar{M}^f$ to represent 
the interactions between agent $f$'s decision variables $(x^f, y^f)$ and the
KKT system of all the other agents, which has the following specific form: 
{\renewcommand{\arraystretch}{1.2}
\begin{equation} \label{mbar}
\bar{M}^{f} = \left[ \begin{array}{c|c|c|c}
		\mathbf{0}_{m \times m} & \mathbf{0}_{m \times n} &
    \mathbf{0}_{m \times m} & \mathbf{0}_{m \times 1} \\ \hline
		\mathbf{0}_{n \times m} & \mathbf{0}_{n \times n} &
    \mathbf{0}_{n \times m} & \mathbf{0}_{n \times 1} \\ \hline
    \mathcal{I}_m & \mathbf{0}_{m \times n} & \mathbf{0}_{m
                              \times m} & \mathbf{0}_{m \times 1} \\ \hline
    \mathbf{0}_{1 \times m} & \mathbf{0}_{1 \times n} & \mathbf{0}_{1 \times
        m} & 0 \end{array} \right].
\end{equation}}

\noindent Using this notation, the stacked KKT system \eqref{KKT2} for agents $f = 1,
\ldots, F$ can be formulated as an LCP$(q,M)$, with the vector $q$ and matrix $M$ given as follows:  
\begin{equation} \label{lcpq}
q = (\bar{q}^1, \bar{q}^2, \ldots, \bar{q}^F)^T, \quad \text{ where } \quad
\bar{q}^f = (\phi^f, d^0, b^f)^T, 
\end{equation}
and 
{\renewcommand{\arraystretch}{1.2}
\begin{equation} \label{lcpM}
M = \left[ \begin{array}{c|c|c|c|c}
      M^1 & \bar{M}^2 & \bar{M}^3 & \cdots & \bar{M}^F \\ \hline
      \bar{M}^1 & M^2 & \bar{M}^3 & \cdots & \bar{M}^F \\ \hline
      \vdots & \vdots & \vdots & \vdots & \vdots \\ \hline
      \bar{M}^1 & \bar{M}^2 & \cdots & \bar{M}^{F - 1} & M^F
      \end{array} \right].
\end{equation}}Due to the equivalence between an agent $f$'s optimization problem
\eqref{dualpotential3} and its KKT system \eqref{KKT1}, the above
LCP$(q,M)$ is equivalent to the corresponding (continuous) DSPI game in
the sense that a candidate point $(\chi^1, \chi^2, \ldots, \chi^F)$,
where $\chi^f = (x^f, y^f)$, is an equilibrium to the DSPI game if and
only if there exist associated Lagrangian multipliers such that they
together solve the LCP$(q,M)$. 

Methods for solving LCPs fall broadly into two categories: (i) pivotal
methods such as Lemke's algorithm, and (ii) iterative methods such as
splitting schemes and interior point methods. The former class of
methods are finite when applicable, while the latter class converge to
solutions in the limit. In general, the applicability of these
algorithms depends on the structural properties of the matrix $M$. In
the following analysis, we show that LCP$(q,M)$ for the DSPI game,
as defined in \eqref{lcpq} and \eqref{lcpM}, possesses two properties
that allow us to use Lemke's pivotal algorithm: (i) the matrix $M$ is a
copositive matrix, and (ii) $q \in (\text{SOL}(0,M))^*$.\footnote{Given a set $K
\in \Real^d$, the set $K^*$ denotes the \emph{dual} cone of K; i.e. $K^* = \{y \in
\Real^d: \ y^T x \geq 0, \ \forall x \in K \}.$}

We first show that $M$ is copositive. Recall that a matrix $M \in
\Real^{d \times d}$ is said to be \emph{copositive} if $x^T M x \geq 0$
for all $x \in \Real^d_+$.
\begin{lemma} \label{lcp_coplus}
$M$ defined as in \eqref{lcpM} is copositive.

\end{lemma}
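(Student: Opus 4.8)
The plan is to verify copositivity directly from the block structure of $M$ by expanding the quadratic form $w^T M w$ and showing every surviving term is a product of nonnegative quantities. Write the stacked decision vector as $w = (w^1, \ldots, w^F)$ with $w^f = (x^f, y^f, \lambda^f, \beta^f)$, and take an arbitrary $w \geq 0$. Reading off the block pattern of \eqref{lcpM}, where the diagonal blocks are the $M^f$ of \eqref{lcpf} and every off-diagonal block in block-column $j$ equals $\bar M^j$ from \eqref{mbar}, I would split the form as
\begin{equation*}
w^T M w \;=\; \sum_{f \in \mathcal{F}} (w^f)^T M^f w^f \;+\; \sum_{\substack{i,j \in \mathcal{F} \\ i \neq j}} (w^i)^T \bar M^j w^j .
\end{equation*}
The whole argument then reduces to handling the two sums separately.

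For the diagonal sum, the key observation is that each $M^f$ is skew-symmetric: from \eqref{lcpf} its transpose swaps the off-diagonal blocks $A^f$ and $-{A^f}^T$ and negates them, so $(M^f)^T = -M^f$. Consequently $(w^f)^T M^f w^f = 0$ for every $f$, and the entire first sum vanishes. This is the crucial step, because it disposes of exactly the terms that could otherwise carry an indefinite sign; I expect this to be the conceptual heart of the proof, even though the verification itself is a one-line transpose computation.

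For the off-diagonal sum I would use the explicit sparsity of $\bar M^j$ in \eqref{mbar}: its only nonzero block is the identity $\mathcal{I}_m$ sitting in the $(\lambda, x)$ position relative to the partition $(x, y, \lambda, \beta)$ of dimensions $(m, n, m, 1)$. Hence $\bar M^j w^j$ is supported only in the $\lambda$-block, where it equals $x^j$, and pairing with $(w^i)^T$ extracts the $\lambda^i$ component, giving $(w^i)^T \bar M^j w^j = (\lambda^i)^T x^j$. Therefore
\begin{equation*}
w^T M w \;=\; \sum_{\substack{i,j \in \mathcal{F} \\ i \neq j}} (\lambda^i)^T x^j \;=\; \sum_{\substack{i,j \in \mathcal{F} \\ i \neq j}} \sum_{(u,v) \in A} \lambda^i_{uv}\, x^j_{uv} \;\geq\; 0,
\end{equation*}
since $w \geq 0$ forces every $\lambda^i_{uv} \geq 0$ and every $x^j_{uv} \geq 0$. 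As this holds for all $w \in \Real^d_+$, the matrix $M$ is copositive. The only thing demanding care is the bookkeeping of which blocks of $\bar M^j$ and $w^j$ meet — once the skew-symmetry of the $M^f$ blocks is recognized, there is no genuine obstacle, as the residual form is manifestly a sum of products of nonnegative coordinates.
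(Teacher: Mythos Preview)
Your proposal is correct and follows essentially the same approach as the paper: both decompose $w^T M w$ into diagonal and off-diagonal block contributions, show the diagonal terms vanish (you invoke skew-symmetry of $M^f$ while the paper expands $-(\chi^f)^T (A^f)^T \delta^f + (\delta^f)^T A^f \chi^f = 0$ directly, which is the same observation), and then read off $(w^i)^T \bar M^j w^j = (\lambda^i)^T x^j \geq 0$ from the sparsity of $\bar M^j$.
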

\begin{proof}
Let $w \in \Real^{2m + n +1}_+$.
Using the block structure of $M$ given in \eqref{lcpM}, $w^T M w$ can be
decomposed as follows.
\begin{equation}
w^T M w = \sum_{f = 1}^F {w^f}^T M^f w^f + \sum_{f=1}^F
\sum\limits_{\substack{f' = 1 \\ f' \neq f}}^F {w^f}^T \bar{M}^{f'} w^{f'}.
\end{equation}
We analyze the terms in the two summations separately. First consider
${w^f}^T M^f w^f$ for any agent $f$. Let the dual variables $(\lambda^f,
\beta^f)$ be collectively denoted by $\delta^f$.
\begin{equation} \label{symmterm}
\begin{array}{rcl}
{w^f}^T M^f w^f & = & \left[ {\chi^f}^T \:\: {\delta^f}^T \right]
\left[ \begin{array}{c|c}
    \mathbf{0} & -{A^f}^T \\ \hline  A^f &
    \mathbf{0} \end{array} \right] 
\begin{bmatrix} \chi^f \\ \delta^f \end{bmatrix} \\
& = &  - {\chi^f}^T {A^f}^T \delta^f + {\delta^f}^T A^f \chi^f = 0.\\
\end{array}
\end{equation}
Now consider any term of the form ${w^f}^T \bar{M}^{f'} w^{f'}$: 
\begin{equation} \label{asymmterm}
\begin{array}{rcl} 
{w^f}^T \bar{M}^{f'} w^{f'} & = & 
  \left[ {x^f}^T \:\: {y^f}^T \:\: {\lambda^f}^T \:\: {\beta^f}^T \right] 
    \left[ \begin{array}{c|c|c|c}
		\mathbf{0} & \mathbf{0} & \mathbf{0} & 0\\ \hline 
        \mathbf{0} & \mathbf{0} & \mathbf{0} & 0\\ \hline
        \mathcal{I}_m & \mathbf{0} & \mathbf{0} & 0 \\ \hline
        \mathbf{0} & \mathbf{0} & \mathbf{0} & 0 \end{array} \right]
     \begin{bmatrix} x^{f'} \\ y^{f'} \\ \lambda^{f'} \\ \beta^{f'} \end{bmatrix} \\
     & = & \left[ {x^f}^T \:\: {y^f}^T \:\: {\lambda^f}^T \:\: \beta^f \right]
        \begin{bmatrix} \mathbf{0} \\ \mathbf{0} \\ x^{f'} \\ 0
        \end{bmatrix}  =  {\lambda^f}^T  x^{f'}.
\end{array}
\end{equation}
Combining \eqref{symmterm} and \eqref{asymmterm} we obtain
\begin{equation} \label{coplussum}
 w^T M w = \sum_{f=1}^F \sum\limits_{\substack{f' = 1 \\ f' \neq f}}^F 
  {\lambda^f}^T  x^{f'}.
\end{equation}
Since $\lambda^f$'s and $x^{f'}$'s are the elements of $w$, 
$w \geq 0$ clearly implies that $w^T M w \geq 0$.  
\end{proof}

We now show property (ii) of the LCP$(q, M)$; that is, $q \in (\text{SOL}(0,M))^*$. 
\begin{lemma} \label{qsol}
Let the vector $q$ and the matrix $M$ be as defined in \eqref{lcpq} and
\eqref{lcpM} respectively. Then $q \in (\text{SOL}(0,M))^*$.
\end{lemma}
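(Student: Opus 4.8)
The plan is to show directly that $q^{T} w \geq 0$ for every $w \in \text{SOL}(0,M)$, which is exactly the assertion $q \in (\text{SOL}(0,M))^{*}$. Writing $w = (w^{1},\dots,w^{F})$ with $w^{f} = (x^{f}, y^{f}, \lambda^{f}, \beta^{f}) \geq 0$, I expect not even to need the complementarity equation $w^{T} M w = 0$: the feasibility conditions $w \geq 0$ and $M w \geq 0$ alone will pin down enough structure. The first step is to read off the block rows of $Mw$ from the decomposition of $M$ in \eqref{lcpM} together with the explicit forms of $M^{f}$ in \eqref{lcpf} and $\bar{M}^{f}$ in \eqref{mbar}. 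Since the off-diagonal $(f,f')$ block equals $\bar{M}^{f'}=\bar{M}$, the agent-$f$ block of $Mw$ is $M^{f} w^{f} + \sum_{f' \neq f} \bar{M} w^{f'}$, whose constraint part (the rows complementary to $\lambda^{f}$ and $\beta^{f}$) works out to $\big(\sum_{f'} x^{f'} + \mathcal{G} y^{f},\ -{c^{f}}^{T} x^{f}\big)$, in agreement with \eqref{KKT2} after deleting the $q$-contribution $(d^{0},b^{f})$.

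The crucial observation comes from the budget row. Feasibility $Mw \geq 0$ forces $-{c^{f}}^{T} x^{f} \geq 0$, while $c^{f} > 0$ and $x^{f} \geq 0$ give ${c^{f}}^{T} x^{f} \geq 0$; hence $x^{f} = 0$ for every agent $f \in \mathcal{F}$. Substituting $x^{f'} = 0$ into the arc-potential rows then yields $\mathcal{G} y^{f} \geq 0$, i.e.\ $y^{f}_{u} \geq y^{f}_{v}$ for every arc $(u,v) \in A$, where the tail-$+1$/head-$-1$ convention for the incidence matrix $\mathcal{G}$ is the one induced by rewriting the constraints of \eqref{dualpotential3} in the form \eqref{dspi_compact}. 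Telescoping this inequality along any directed $s^{f}$-$t^{f}$ path --- which exists because each adversary's shortest-path problem \eqref{spi_primal} is assumed feasible --- gives $y^{f}_{s^{f}} \geq y^{f}_{t^{f}}$.

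Finally I assemble $q^{T} w$. Using $q = (\bar q^{1},\dots,\bar q^{F})$ with $\bar q^{f} = (\phi^{f}, d^{0}, b^{f})$ from \eqref{lcpq}, $\phi^{f} = (\mathbf{0}_{m}, \nu^{f})$, and recalling $x^{f} = 0$, each agent's contribution becomes ${\phi^{f}}^{T}(x^{f}, y^{f}) + {d^{0}}^{T}\lambda^{f} + b^{f}\beta^{f} = (y^{f}_{s^{f}} - y^{f}_{t^{f}}) + {d^{0}}^{T}\lambda^{f} + b^{f}\beta^{f}$. The first term is nonnegative by the telescoping argument of the previous paragraph; the second is nonnegative because $d^{0} > 0$ and $\lambda^{f} \geq 0$; the third because $b^{f} > 0$ and $\beta^{f} \geq 0$. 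Summing over $f \in \mathcal{F}$ gives $q^{T} w \geq 0$, and since $w \in \text{SOL}(0,M)$ was arbitrary, $q \in (\text{SOL}(0,M))^{*}$.

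I expect the main obstacle to be bookkeeping rather than conceptual: correctly extracting the constraint part of each block row of $Mw$ from the nested block definitions \eqref{lcpM}, \eqref{lcpf} and \eqref{mbar}, and fixing the sign convention of $\mathcal{G}$ so that $\mathcal{G} y^{f} \geq 0$ genuinely translates into node potentials that do not increase along arcs. The only hypothesis used beyond the given data ($d^0>0$, $b^f>0$, $c^f>0$) is the existence of an $s^{f}$-$t^{f}$ path, which is implicit in the DSPI game being well posed.
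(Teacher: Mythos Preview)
Your proposal is correct and follows essentially the same route as the paper's proof: use the budget row of $Mw \ge 0$ together with $c^{f}>0$ and $x^{f}\ge 0$ to force $x^{f}=0$, deduce $y^{f}_{u}\ge y^{f}_{v}$ on each arc, telescope along an $s^{f}$--$t^{f}$ path, and combine with the nonnegativity of ${d^{0}}^{T}\lambda^{f}$ and $b^{f}\beta^{f}$ to conclude $q^{T}w\ge 0$. Your explicit remark that only $w\ge 0$ and $Mw\ge 0$ (not the complementarity condition) are used is a nice clarification, but the argument itself matches the paper's.
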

\begin{proof} 
First note that $\text{SOL}(0,M)\neq \emptyset$ for any $M$, since $0$ is always a solution to LCP$(0,M)$.  Now 
consider a $w \in \text{SOL}(0,M)$; i.e. $0 \leq w \perp 0 + M w \geq 0$. 
We prove that $q^T w \geq 0$. Observe that $q^T w$ can be decomposed as follows:
\begin{equation}\label{qw}
\begin{array}{rcl}
  q^T w & = & \displaystyle \sum_{f=1}^F \bar{q}^{f^T} w^f  =  \displaystyle \sum_{f=1}^f \left( \phi^{f^T} \begin{bmatrix} x^f \\ y^f \end{bmatrix} +
    d^{0^T} \lambda^f + b^f \beta^f \right)  \\
    & = & \displaystyle \sum_{f=1}^F\left[ (y^f_{s^f} - y^f_{t^f}) + d^{0^T} \lambda^f
    + b^f \beta^f\right].
\end{array}
\end{equation}
The last two terms in the last equality above, $d^{0^T} \lambda^f$ and
$b^f \beta^f$, can be easily seen to be nonnegative for $f = 1, \ldots
F$. This is so because $w \in \text{SOL}(0,M)$ implies that
$\lambda^f, \beta^f \geq 0$, and by assumption $d^0, b^f \geq 0$  for
each $f = 1 ,\dots, F$. 

Now we focus on the first term in the last equality of \eqref{qw}: 
$\sum_{f=1}^F (y^f_{s^f} - y^f_{t^f})$. First since $M w \geq 0$, $w^f$
must solve the system~\eqref{KKT2} for $f = 1, \ldots, F$, with
$\phi^f$, $d^0$ and $b^f$ all set at zeros (as $\phi^f$, $d^0$ and $b^f$
are the components of the vector $q$ in the LCP$(q, M)$, as defined in
\eqref{lcpq}; and in LCP$(0, M)$,  $q=0$). In this case, considering the
primal feasibility of $w^f$, we obtain the following:
\begin{equation} \label{KKT0} \left.
\begin{array}{rcl}
  \displaystyle \sum_{a \in A} c^f_a x^f_a & \leq & 0 \\
  \displaystyle y^f_u - y^f_v + \sum_{f=1}^F x^f_{u,v} & \geq & 0 \quad \forall

  (u,v) \in A
\end{array} \right\} \quad \text{for } f = 1, \ldots, F.
\end{equation}
Recall that $c^f_a \geq 0$ for all $a \in A$ and $f = 1, \ldots, F$ by
assumption. Therefore, \eqref{KKT0} implies that $x^f = 0$ for any agent~$f$.
It is easy to see that in this case, we must have 
\begin{equation} \label{nonnegpot}
y^f_u - y^f_v \geq 0 \quad \forall (u,v) \in A, \text{ for } f = 1, \ldots F.
\end{equation}
Now consider any $s^f$-$t^f$ path $\mathcal{P}^f$. By assumption, there
must be at least one such path for each agent~$f$. By summing up the
inequalities~\eqref{nonnegpot} over the arcs in the path
$\mathcal{P}^f$, we obtain the desired result. In other words,
\begin{equation}
 \displaystyle \sum_{(u,v) \in \mathcal{P}^f} y^f_u - y^f_v = y^f_{s^f} -
 y^f_{t^f} \geq 0.
\end{equation}
Summing up over the agents, we thus have shown that $q^T w \geq 0$ for any $w \in \text{SOL(0, M)}$, which completes the proof.
\end{proof}

With Lemma \ref{lcp_coplus} and \ref{qsol}, we can apply the following result
from Cottle et al. \cite{cottle2009linear}.
\begin{theorem} (\cite{cottle2009linear}, Theorem 4.4.13) If $M$ is copositive and $q \in (\text{SOL}(q,M))^*$, then Lemke's method will
  always compute a solution, if the problem is nondegenerate.\footnote{A detailed
  discussion of degeneracy and cycling in Lemke's method can be found
in Section 4.9 of \cite{cottle2009linear}.}
\end{theorem}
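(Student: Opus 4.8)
The statement is a general result from the theory of linear complementarity problems (imported from \cite{cottle2009linear}), so the plan is to prove it through the standard secondary-ray analysis of Lemke's algorithm, using only that $M$ is copositive (Lemma~\ref{lcp_coplus}) and that $q \in (\text{SOL}(0,M))^*$ (Lemma~\ref{qsol}); nothing specific to the DSPI structure is needed. Recall that Lemke's method embeds LCP$(q,M)$ into the augmented system $w = q + Mz + z_0 \mathbf{1}$ with $w \ge 0$, $z \ge 0$, $z_0 \ge 0$, and traces an almost-complementary path (one along which $w^T z = 0$ everywhere) starting from the primary ray on which $z = 0$ and $z_0$ is large. Under nondegeneracy this path is a finite union of edges, so the algorithm terminates in finitely many pivots in exactly one of two ways: it reaches a point with $z_0 = 0$, which is a genuine solution of LCP$(q,M)$, or it reaches a \emph{secondary ray}. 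The whole task thus reduces to ruling out termination on a secondary ray.

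So I would assume, toward a contradiction, that the method ends on a secondary ray, and extract its base vertex $(\bar w, \bar z, \bar z_0)$ and its direction $(\xi, \eta, \eta_0) \ge 0$, $\ne 0$. The defining relations are the homogeneous equation $\xi = M\eta + \eta_0 \mathbf{1}$, the almost-complementarity $\xi^T \eta = 0$, and the cross-complementarity $\bar w^T \eta = 0$ and $\xi^T \bar z = 0$; moreover $\bar z_0 > 0$, since otherwise the base vertex would already be a solution and no ray would arise. Substituting the homogeneous equation into $\xi^T \eta = 0$ gives $\eta^T M \eta + \eta_0 (\mathbf{1}^T \eta) = 0$, and because $M$ is copositive and $\eta \ge 0$, both summands are nonnegative, forcing $\eta^T M \eta = 0$ and $\eta_0 (\mathbf{1}^T \eta) = 0$. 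After noting that a secondary ray is distinct from the primary ray and therefore has $\eta \ne 0$, the relation $\eta_0(\mathbf{1}^T\eta)=0$ forces $\eta_0 = 0$; then $\xi = M\eta \ge 0$ together with $\eta \ge 0$ and $\eta^T M \eta = 0$ shows exactly that $\eta \in \text{SOL}(0,M)$. The hypothesis $q \in (\text{SOL}(0,M))^*$ then yields $q^T \eta \ge 0$.

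To close the contradiction I would compute $q^T\eta$ from the base-vertex equation: taking the inner product of $\bar w = q + M\bar z + \bar z_0 \mathbf{1}$ with $\eta$ and using $\bar w^T\eta = 0$ gives $q^T\eta = -\,\eta^T M \bar z - \bar z_0(\mathbf{1}^T\eta)$. The one genuinely nonroutine step is controlling the sign of the cross term $\eta^T M\bar z$ when $M$ is only copositive (hence neither symmetric nor necessarily copositive-plus): here I would apply copositivity to the nonnegative vector $\eta + s\bar z$ for $s > 0$, expand $(\eta + s\bar z)^T M(\eta + s\bar z) \ge 0$, divide by $s$ and let $s \downarrow 0$ to get $\eta^T M\bar z + \bar z^T M\eta \ge 0$; combined with the cross-complementarity identity $\bar z^T M\eta = \xi^T \bar z = 0$ this gives $\eta^T M\bar z \ge 0$. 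Substituting yields $q^T\eta \le -\bar z_0(\mathbf{1}^T\eta) < 0$, since $\bar z_0 > 0$ and $\mathbf{1}^T\eta > 0$, contradicting $q^T\eta \ge 0$. Hence no secondary ray can occur and Lemke's method must terminate with a solution. I expect the main obstacles to be the careful bookkeeping of the secondary-ray relations — in particular establishing $\eta \ne 0$ and $\bar z_0 > 0$ rigorously from the pivoting description — and the asymmetry-handling limit argument for $\eta^T M\bar z$; once these are in place, copositivity and the dual-cone condition slot in precisely at the two points where a nonnegativity certificate is required.
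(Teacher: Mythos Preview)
The paper does not prove this theorem at all: it is quoted verbatim as Theorem~4.4.13 of \cite{cottle2009linear} and simply \emph{applied}, once Lemmas~\ref{lcp_coplus} and~\ref{qsol} have verified its hypotheses for the DSPI LCP. Your proposal therefore goes well beyond what the paper does, supplying the actual secondary-ray argument that the paper delegates to the reference. The argument you outline is correct and is essentially the proof given in \cite{cottle2009linear}: the identification of $\eta$ as an element of $\text{SOL}(0,M)$ via copositivity, the use of $q \in (\text{SOL}(0,M))^*$ to obtain $q^T\eta \ge 0$, and the contradiction from the base-vertex relation are exactly the standard steps. Your handling of the cross term $\eta^T M\bar z$ via the limit argument on $(\eta + s\bar z)^T M(\eta + s\bar z)$, combined with $\bar z^T M\eta = \bar z^T\xi = 0$, is the right way to avoid any copositive-plus assumption. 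One small point: the theorem as stated in the paper reads $q \in (\text{SOL}(q,M))^*$, which is almost certainly a typographical slip for $q \in (\text{SOL}(0,M))^*$; you correctly work with the latter, which is both what Lemma~\ref{qsol} establishes and what the proof actually requires.
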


As discussed earlier, the LCP approach is not applicable for discrete
DSPI games due to the lack of necessary and sufficient optimality
conditions. In the following we develop a decentralized approach that
works for both discrete and continuous DSPI games.

\subsection{Gauss-Seidel Algorithm (Algorithm 1)}

We first present the basic form of a best response based algorithm.
The idea is simple: starting with a particular feasible decision vector $\chi_0 = (\chi_0^1, \chi_0^2, \ldots,
\chi_0^F) \in \Omega(\chi_0)$, solve the optimization problem of a particular agent, say,
agent 1, with all of the other agents' actions fixed. Assume an optimal
solution exists to this optimization problem, and denote it as
$\chi^{1*}_1$. The next agent, say, agent 2, solves its own optimization
problem, with the other agents' actions fixed as well, but with
$\chi_0^1$ replaced by $\chi^{1*}_1$. Such an approach is often referred
to as a diagonalization scheme or a Gauss-Seidel iteration, and for
the remainder of this paper we use the latter name to refer to this
simple best-response approach. 

Consider applying the Gauss-Seidel iteration to a GNEP, with each agent 
solving the optimization problem \eqref{gnep1}, denoted as $\mathcal{P}(\chi^{-f})$. The Gauss-Seidel
iterative procedure is presented in Algorithm 1 below.

\begin{algorithm}
  \caption{Gauss-Seidel Algorithm for a GNEP} \label{GS_convex}
  \begin{algorithmic}[0]
    \State Initialize. Choose $\chi_0 = (\chi_0^1, \ldots, \chi_0^F)$ with $\chi_0^f \in \Xi^f(\chi_0^{-f})$\ $\forall f \in \mathcal{F}$. Set $k \leftarrow 0$.

    \smallskip
    \State Step 1:
    \For{$f=1,2,\ldots,F$} 
    \State Set $\chi_{k,f}^{-f} \leftarrow (\chi^1_{k+1}, \ldots, \chi^{f-1}_{k+1}, \chi^{f+1}_k, \ldots, \chi^F_k)$;
    \State Solve $\mathcal{P}(\chi_{k,f}^{-f})$ to obtain an optimal solution $\chi_{k,f}^*$;
    \If{$\theta^f(\chi_{k,f}^*, \chi^{-f}_{k,f}) \ > \ \theta^f(\chi^f_{k}, \chi^{-f}_{k,f})$} Set $\chi_{k+1}^f \leftarrow \chi_{k,f}^*$; 
 \Else \ Set $\chi_{k+1}^f \leftarrow \chi_k^f$;
    \EndIf
    \EndFor
    \State Set $ \chi_{k+1} \leftarrow (\chi^1_{k+1}, \ldots, \chi^F_{k+1}). $
    \State Set $k \leftarrow k+1$.

    \smallskip
    \State {\bf if} $\chi_k$ satisfies termination criteria, {\bf then STOP}; {\bf else} \textbf{GOTO} Step 1.
  \end{algorithmic}
\end{algorithm}

Note that updates in agent $f$'s decisions occur at iteration $k$ only if there
is a strict increase in the agent's payoff at the iteration.
The algorithm can be directly applied to compute an
equilibrium of a DSPI game with discrete interdiction. For finite
termination, we fix a tolerance parameter $\epsilon$ and use the
following stopping criterion: 
\begin{equation}\label{termin}
  \norm{\chi_k - \chi_{k-1}} \leq \epsilon. 
\end{equation}

\begin{proposition} \label{GS_heuristic}
  Suppose that the Gauss-Seidel algorithm (Algorithm~\ref{GS_convex}) 
  is applied to the DSPI game with discrete
  interdiction, and the termination criterion \eqref{termin} is used
  with $\epsilon < 1$. If the algorithm terminates at $\chi_k$, then
  $\chi_k$ is an equilibrium to this problem.
\end{proposition}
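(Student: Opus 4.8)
The plan is to combine the integrality of the interdiction variables with the threshold $\epsilon<1$ to show that the binary part of the iterate freezes over the final Gauss--Seidel sweep, and then to read the generalized Nash condition \eqref{gne} directly off the update rule. Write $\chi_k=(x_k,y_k)$, where $x_k$ collects every agent's binary interdiction vector and $y_k$ the potentials. The termination inequality $\norm{\chi_k-\chi_{k-1}}\le\epsilon<1$ also bounds the subvector: for any monotone ($\ell_p$-type) norm, $\norm{x_k-x_{k-1}}\le\norm{\chi_k-\chi_{k-1}}<1$. Since each component of $x_k-x_{k-1}$ lies in $\{-1,0,1\}$, a single nonzero entry would already force the norm to be at least $1$; hence $x_k=x_{k-1}$. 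In words, during the sweep that produced $\chi_k$ no agent changed which arcs it interdicts.

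Next I would exploit the sequential structure of that sweep. When agent $f$ was processed it solved its subproblem against the partially updated profile $(\chi^1_k,\dots,\chi^{f-1}_k,\chi^{f+1}_{k-1},\dots,\chi^F_{k-1})$. Because the discrete aftermath arc lengths \eqref{ArcDistDisc}, and therefore both the feasible set $\Xi^f$ and the objective in \eqref{dspi-gnep}, depend on the rivals only through their interdiction vectors $x^{-f}$, and because $x_k=x_{k-1}$ makes the interdiction part of this profile equal to $x^{-f}_k$, the optimization agent $f$ actually solved coincides with the subproblem $\mathcal{P}(\chi^{-f}_k)$. Let $v^*_f$ denote its optimal value and $\chi^{f*}$ an optimizer.

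I would then finish with a two-case argument applied to each $f$. If agent $f$ updated, then $\chi^f_k=\chi^{f*}$ attains $v^*_f$ and is by construction a best response to $\chi^{-f}_k$. If agent $f$ did not update, the improvement test failed, so $v^*_f\le\theta^f(\chi^f_{k-1},\chi^{-f}_k)$; but $\chi^f_{k-1}$ is itself feasible for $\mathcal{P}(\chi^{-f}_k)$ (feasibility depends only on $x^{-f}_k$), whence $\theta^f(\chi^f_{k-1},\chi^{-f}_k)\le v^*_f$. The two bounds give $v^*_f\le\theta^f(\chi^f_{k-1},\chi^{-f}_k)\le v^*_f$, so equality holds and $\chi^f_k=\chi^f_{k-1}$ also attains $v^*_f$, i.e.\ is a best response. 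Since this holds for every $f\in\mathcal{F}$, the profile $\chi_k$ satisfies \eqref{gne} and is a PGNE of the GNEP; by Proposition~\ref{GNEP_equivalence} the corresponding $x_k$ is a PNE of the primal discrete game.

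The step I expect to be the main obstacle is the non-update case. The per-agent objective $y^f_{t^f}-y^f_{s^f}$ is only a lower bound on the true aftermath shortest-path length unless the potentials $y^f$ are chosen optimally, so one cannot simply assert that an agent that ``did nothing'' holds optimal potentials. The resolution is precisely the sandwich $v^*_f\le\theta^f(\chi^f_{k-1},\chi^{-f}_k)\le v^*_f$, which certifies optimality from feasibility together with the failed improvement test, and it relies essentially on the earlier reduction that each subproblem depends on the rivals only through $x^{-f}$. I would also be careful to state explicitly the mild assumption that $\norm{\cdot}$ is a monotone norm, since the subvector bound in the first step is what converts the scalar tolerance $\epsilon<1$ into the combinatorial statement $x_k=x_{k-1}$.
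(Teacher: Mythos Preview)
Your argument is correct and in fact more careful than the paper's own proof. The paper simply asserts that all of $\chi_k$ is integral in the discrete case (implicitly relying on the remark that the potentials $y^f$ may be taken integral when the arc data $d^0$ and $e$ are integral), so that $\|\chi_k-\chi_{k-1}\|<1$ forces $\chi_k=\chi_{k-1}$ outright, and then concludes in one line that each $\chi_k^f$ is the maximizer of $\mathcal{P}(\chi_k^{-f})$. You instead use only the guaranteed integrality of the interdiction vectors $x^f$ and the structural observation that both $\Xi^f$ and $\theta^f$ in \eqref{dspi-gnep} depend on the rivals solely through $x^{-f}$; this lets you conclude that the last sweep solved exactly $\mathcal{P}(\chi_k^{-f})$ without assuming $y_k=y_{k-1}$, and hence without the hidden integral-data hypothesis. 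Your explicit update / no-update case split also spells out what the paper leaves as ``by construction.'' One point worth tightening: in the no-update case your sandwich needs $\chi^f_{k-1}\in\Xi^f(x^{-f}_k)$, and the parenthetical ``feasibility depends only on $x^{-f}_k$'' explains why the feasible set is determined by $x^{-f}_k$ but does not by itself establish membership; the paper's proof elides this same feasibility check.
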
 

\begin{proof}
  Since the variables $\chi_k$ are integral for discrete interdiction
  problems, choosing $\epsilon < 1$ for the termination criterion will
  ensure that the algorithm terminates only when successive outer
  iterates are equal. Consequently, by the assumption, $\chi_{k-1} =
  \chi_k$ at termination. This also implies that $\chi_{k-1,f}^{-f} =
  \chi_k^{-f}$ for $f = 1, \ldots, F$. By construction of $\chi_k$, we
  must then have
  \begin{equation*}
    \chi_k^f = \amin_{\chi^f \in \Xi^f(\chi_k^{-f})}
    \theta^f(\chi^f, \chi_k^{-f}). 
  \end{equation*}
  Clearly, $\chi_k$ must then be an equilibrium.
\end{proof}

Even though there is no guarantee that the algorithm will in fact converge, 
we note that in the discrete case, it is possible to detect when the
algorithm fails to converge. Recall that $\Xi^f(\chi^{-f}) \subseteq
K^f$ for each agent $f \in \mathcal{F}$, where $K^f$ is defined below.
  \begin{equation} 
    \label{kf}
    K^f = 
    \left\{ (x^f, y^f) \  \Bigg\vert \ 
      \begin{aligned}
        \sum_{(u,v) \in V} c^f_{uv} x^f_{uv} & \leq b^f, \\
        0 \leq y^f_u & \leq M \quad \forall u \in V
      \end{aligned} 
    \right\}. 
  \end{equation}
  Clearly, the set $\prod_{f=1}^F K^f$ is finite. Any
intermediate point $\chi_k$ generated by Algorithm~\ref{GS_convex} must
certainly satisfy the budgetary constraints on $x^f_k$ and the bound
constraints on $y^f_k$ for each agent $f$. Therefore $\chi_k \in
\prod_{f=1}^F K^f$. In other words, the set of possible points
$\chi_k$ generated by Algorithm~\ref{GS_convex} lies in a finite set. This
means that if the algorithm fails to converge, it must generate a
sequence that contains at least one cycle. The existence of such
cycles in non-convergent iterate paths can then be used to detect
situations in which the algorithm fails to converge.

Proposition \ref{GS_heuristic} is likely the best one can do
for general DSPI games under discrete interdiction. However, for the subclass of
  such games with common source-target pairs, we can in fact prove that the best
  response dynamics always terminates in a NE in a finite number of steps.

\begin{proposition} \label{GS_commonst}
  Consider a DSPI game with discrete interdiction with common source-target
  pairs, and assume that the initial arc lengths $d$ and arc extensions $e$
  are integral. Suppose that Algorithm~\ref{GS_convex} is applied to such a problem, and the termination criteria \eqref{termin} is used with $\epsilon < 1$.
  Then the algorithm will terminate finitely at an equilibrium.
\end{proposition}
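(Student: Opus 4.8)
The plan is to exploit the fact that, when all agents share a common source-target pair $(s,t)$, the DSPI game is an \emph{identical-interest} game: every agent $f \in \mathcal{F}$ solves $\max_{x^f} \min_{p \in P^{st}} d_p(x^1,\ldots,x^F)$, and the inner value $\Phi(x) := \min_{p \in P^{st}} d_p(x^1,\ldots,x^F)$ is one and the same function of the joint strategy for all $f$. Thus $\Phi$ serves as an exact potential for the game, and the argument reduces to showing that best-response dynamics driven by strictly improving updates on a finite-valued potential must terminate.

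First I would record the monotonicity of $\Phi$ along the iterates. By the update rule in Algorithm~\ref{GS_convex}, agent $f$'s action is changed at an inner step only when $\theta^f$ strictly increases; since here $\theta^f = \Phi$ for every $f$, each actual update strictly increases the common value $\Phi$ evaluated at the current joint strategy. Consequently $\Phi$ is nondecreasing across the inner loop and the outer iterations, and strictly increasing at each update.

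Second, I would use the integrality hypothesis to bound the number of updates. Because the initial arc lengths $d^0$ and the extensions $e$ are integral and the interdiction variables are binary, every aftermath arc length $d_{uv} = d^0_{uv} + e_{uv}\max_{f \in \mathcal{F}} x^f_{uv}$ from \eqref{ArcDistDisc} is integral; hence every path length $d_p$ is integral, and $\Phi$, being a minimum of integers, is integer-valued. Moreover $\Phi$ is bounded above by $M = \sum_{(u,v)\in A}(d^0_{uv} + e_{uv})$. An integer quantity confined to $[\Phi(\chi_0), M]$ that strictly increases at every update can increase only finitely many times, at most $M - \Phi(\chi_0)$ times, so only finitely many updates occur over the entire run.

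Finally, once no further update is possible, a complete outer sweep over $f = 1,\ldots,F$ leaves every coordinate unchanged, so $\chi_{k+1} = \chi_k$; with $\epsilon < 1$ this satisfies the termination criterion \eqref{termin}, and Proposition~\ref{GS_heuristic} then guarantees that the terminal iterate is a PNE. The main obstacle, and the reason the integrality assumption is essential, is ensuring that each strict improvement of the potential is bounded away from zero: without integrality, $\Phi$ could in principle increase by ever-smaller amounts and approach a supremum without the dynamics halting. Integrality forces increments of at least one, which simultaneously rules out the cycling behavior responsible for nonexistence in the non-common-pair setting of Example~\ref{example0} and yields an explicit finite bound on the iteration count.
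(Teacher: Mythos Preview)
Your proof is correct and follows essentially the same strategy as the paper: both recognize that with a common source--target pair the game is identical-interest, so the shared shortest $s$--$t$ path length acts as a potential that strictly increases at every update; both then conclude finite termination and invoke Proposition~\ref{GS_heuristic}.

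The only difference lies in how finiteness of the number of updates is argued. The paper observes that the joint feasible set in $x$ is finite (discrete interdiction), so a strictly increasing potential over that set cannot cycle and must terminate. You instead use the integrality hypothesis directly: since $d^0$ and $e$ are integral and the interdiction variables are binary, $\Phi$ is integer-valued and bounded above by $M$, so each strict improvement is at least one and there are at most $M - \Phi(\chi_0)$ updates. Your route makes more explicit use of the integrality assumption stated in the proposition and yields a concrete iteration bound, whereas the paper's finiteness argument works from the finiteness of the binary strategy space alone; both are valid and lead to the same conclusion.
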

\begin{proof}
Denote the common source node as $s$, and the common target
node as $t$. The set of joint
feasible strategies in $x$ under the given assumptions is a finite set.
Moreover, all the agents attempt to minimize the common objective, namely the
$s$-$t$ path length. Note that at any iteration $k$ at which an update occurs for
any agent's decision, there must then be a strict increase in the $s$-$t$ path
length. Thus there can be no cycles in the sequence $\{\chi_k\}$. Furthermore,
since the set of joint feasible strategies is finite, the sequence must
terminate at some point $\chi^*$. It is easy to show that $\chi^*$ must be an
equilibrium (cf.~Proposition \ref{GS_heuristic}).
\end{proof}

\subsection{Regularized Gauss-Seidel Algorithm (Algorithm 2)}

One disadvantage of the
``na\"{i}ve'' Gauss-Seidel algorithm described above is that for continuous GNEPs,
it can fail to converge to equilibria. However, Facchinei et al.~\cite{facchinei_gpg}
showed that under certain assumptions, we can overcome this issue by
adding a regularization term to the individual agent's problem solved in
a Gauss-Seidel iteration.

The regularized version of the optimization problem for agent $f \in
\mathcal{F}$ is 
\begin{equation} \label{reg_sub}
  \begin{aligned}
    \maximize{\chi^f} \quad & \theta^f(\chi^f, \chi^{-f}) - \tau \norm{\chi^f - \overline{\chi}^f}^2 \\
    \text{subject to} \quad & \chi^f \in \Xi^f(\chi^{-f}),
  \end{aligned}
\end{equation} 
where $\tau$ is a positive constant. Here the regularization term is
evaluated  in relation to a candidate point $\overline{\chi}^f$.
Note that the point $\overline{\chi}^f$ and the other agents' decision
variables $\chi^{-f}$ are fixed when the problem \eqref{reg_sub} is
solved.  We refer to problem~\eqref{reg_sub} as $\mathcal{R}(\chi^{-f},
\overline{\chi}^f)$. 
The regularized Gauss-Seidel procedure, herein referred to as Algorithm 2, 
is simply Algorithm 1, except that $\mathcal{R}(\chi^{-f},
\overline{\chi}^f)$ is solved in each iteration $k$ instead of $\mathcal{P}(\chi_{k,f}^{-f})$, 
with $\overline{\chi}^f$ given by $\chi_k^f$, for each $f$. 

This version of the algorithm, along with its convergence proof, was originally presented in
\cite{facchinei_gpg} to solve GNEPs with shared constraints. 
The difficulty here that prevents us from showing convergence lies in the
fact that we are dealing with GNEPs with non-shared constraints.  
As a result, any intermediate points resulting from an agent's best responses need not to be feasible in the other agents' problems.
Consequently, we use Algorithm 2 only as a heuristic algorithm to solve
DSPI games under continuous interdiction. 
Nevertheless, we can show that if
Algorithm 2 converges, then the resulting point is an equilibrium
to the DSPI game.
\begin{proposition}\label{Exist_Cont}
   Let $\{ \chi_k \}$ be the sequence generated by applying
  Algorithm 2 to the DSPI problem under continuous interdiction,
  wherein each agent solves \eqref{dualpotential}. Suppose $\{ \chi_k \}$ converges to
  $\bar{\chi}$. Then $\bar{\chi}$ is an equilibrium to the DSPI problem.
\end{proposition}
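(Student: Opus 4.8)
The goal is to show that the limit $\bar\chi=(\bar\chi^1,\dots,\bar\chi^F)$ is a PGNE, i.e.\ that for every $f\in\mathcal F$ the block $\bar\chi^f$ solves the \emph{unregularized} problem $\mathcal P(\bar\chi^{-f})$: $\theta^f(\bar\chi^f,\bar\chi^{-f})\ge\theta^f(\chi^f,\bar\chi^{-f})$ for all $\chi^f\in\Xi^f(\bar\chi^{-f})$. My plan has two stages. First I would pass to the limit in the optimality conditions of the regularized subproblems to show that $\bar\chi^f$ solves the \emph{limiting} regularized problem $\mathcal R(\bar\chi^{-f},\bar\chi^f)$, exploiting the fact that the regularization is centered at $\chi_k^f$, which converges to the same point $\bar\chi^f$ as the solution, so the penalty term evaluated at the center vanishes. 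Second, I would strip off the regularization by a first-order perturbation argument, using that in the dual formulation \eqref{dualpotential} the payoff $\theta^f(\chi^f,\chi^{-f})=y^f_{t^f}-y^f_{s^f}$ is linear—hence concave—in $\chi^f$ and $\Xi^f(\bar\chi^{-f})$ is convex.

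For the first stage, fix $f$ and recall that at iteration $k$ the regularized best response $\chi_{k,f}^*$ maximizes $\theta^f(\cdot,\chi_{k,f}^{-f})-\tau\norm{\cdot-\chi_k^f}^2$ over $\Xi^f(\chi_{k,f}^{-f})$, with $\chi_{k,f}^{-f}=(\chi_{k+1}^1,\dots,\chi_{k+1}^{f-1},\chi_k^{f+1},\dots,\chi_k^F)$. Since $\chi_k^f$ is the regularization center, the maximizer improves the regularized objective over $\chi_k^f$ by at least $\tau\norm{\chi_{k,f}^*-\chi_k^f}^2$, so the acceptance test of Algorithm~1 forces $\chi_{k+1}^f=\chi_{k,f}^*$. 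By hypothesis $\chi_k\to\bar\chi$, hence $\chi_{k,f}^{-f}\to\bar\chi^{-f}$ and $\chi_k^f,\chi_{k+1}^f\to\bar\chi^f$, so $\norm{\chi_{k+1}^f-\chi_k^f}^2\to0$. Taking an arbitrary $\chi^f\in\Xi^f(\bar\chi^{-f})$, approximating it by feasible points $\chi^f_k\in\Xi^f(\chi_{k,f}^{-f})$ with $\chi^f_k\to\chi^f$, and passing to the limit in the subproblem optimality inequality $\theta^f(\chi_{k+1}^f,\chi_{k,f}^{-f})-\tau\norm{\chi_{k+1}^f-\chi_k^f}^2\ge\theta^f(\chi^f_k,\chi_{k,f}^{-f})-\tau\norm{\chi^f_k-\chi_k^f}^2$ yields
\[
\theta^f(\bar\chi^f,\bar\chi^{-f})\ \ge\ \theta^f(\chi^f,\bar\chi^{-f})-\tau\norm{\chi^f-\bar\chi^f}^2\qquad\forall\,\chi^f\in\Xi^f(\bar\chi^{-f}),
\]
which is precisely the statement that $\bar\chi^f$ solves $\mathcal R(\bar\chi^{-f},\bar\chi^f)$.

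For the second stage, suppose toward a contradiction that some $\chi^*\in\Xi^f(\bar\chi^{-f})$ satisfies $\theta^f(\chi^*,\bar\chi^{-f})>\theta^f(\bar\chi^f,\bar\chi^{-f})$. For $\lambda\in(0,1]$ the point $\chi_\lambda=(1-\lambda)\bar\chi^f+\lambda\chi^*$ lies in $\Xi^f(\bar\chi^{-f})$ by convexity, and using linearity of $\theta^f$ together with $\norm{\chi_\lambda-\bar\chi^f}^2=\lambda^2\norm{\chi^*-\bar\chi^f}^2$, the regularized value at $\chi_\lambda$ exceeds $\theta^f(\bar\chi^f,\bar\chi^{-f})$ by $\lambda\big(\theta^f(\chi^*,\bar\chi^{-f})-\theta^f(\bar\chi^f,\bar\chi^{-f})\big)-\tau\lambda^2\norm{\chi^*-\bar\chi^f}^2$, which is strictly positive for all sufficiently small $\lambda$ because the first-order gain dominates the second-order penalty. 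This contradicts the displayed optimality of $\bar\chi^f$ for $\mathcal R(\bar\chi^{-f},\bar\chi^f)$. Hence $\bar\chi^f$ solves $\mathcal P(\bar\chi^{-f})$ for every $f$, so $\bar\chi$ is a PGNE and, by Proposition~\ref{GNEP_equivalence}, its $x$-part is an equilibrium of the DSPI game.

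The main obstacle is the first stage, and it is exactly the non-shared-constraint difficulty emphasized earlier: a fixed comparison point $\chi^f\in\Xi^f(\bar\chi^{-f})$ need not be feasible in $\Xi^f(\chi_{k,f}^{-f})$, so it cannot be inserted directly into the subproblem optimality inequality, and one must instead construct feasible approximants $\chi^f_k\to\chi^f$. This requires the inner (lower) semicontinuity of the set-valued map $\chi^{-f}\mapsto\Xi^f(\chi^{-f})$ together with feasibility of the limit, $\bar\chi^f\in\Xi^f(\bar\chi^{-f})$ (the latter is also what justifies, in the limit, the acceptance identity $\chi_{k+1}^f=\chi_{k,f}^*$ used above). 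Both properties hold here because, under continuous interdiction, the only coupling constraints $y^f_v-y^f_u\le d^0_{uv}+x^f_{uv}+\sum_{f'\neq f}x^{f'}_{uv}$ are affine in $(\chi^f,\chi^{-f})$, while the remaining constraints $x^f\in X^f$, the budget $\sum_{(u,v)\in A}c^f_{uv}x^f_{uv}\le b^f$, and $0\le y^f_u\le M$ do not involve $\chi^{-f}$ at all. Closedness of these constraints carries feasibility to the limit, and the affine, Hoffman/Slater-type structure of the coupling constraints delivers the continuity of $\Xi^f(\cdot)$ needed to build the approximants.
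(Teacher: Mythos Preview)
Your proof is correct and follows essentially the same route as the paper's: both rely on inner semicontinuity of $\chi^{-f}\mapsto\Xi^f(\chi^{-f})$ (from the affine coupling constraints) to build feasible approximants, pass to the limit in the regularized-subproblem optimality, and then use concavity to peel off the regularization. The only difference is organizational: the paper runs a single contradiction via directional derivatives (showing $\theta'^f(\bar\chi^f,\bar\chi^{-f};\bar\xi^f-\bar\chi^f)>0$ from the subgradient inequality and $\le 0$ from the limit of the first-order conditions), whereas you split the argument into two explicit stages---first establishing that $\bar\chi^f$ solves $\mathcal R(\bar\chi^{-f},\bar\chi^f)$ via value inequalities, then stripping the penalty by the convex-combination perturbation---which avoids directional derivatives entirely and is slightly more elementary.
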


The proof of this proposition is almost identical to that of
Theorem 4.3 in Facchinei et al.~\cite{facchinei_gpg}. However, we do want to point out one key difference in the
proof. In Proposition \ref{Exist_Cont},  we need to assume that the
entire sequence $\{ \chi_k \}$ converges to $\bar{\chi}$. This is a
strong assumption in the sense that it also requires that all the
intermediate points $\chi_{k,f}$ in Algorithm 2  to
converge to $\bar{\chi}$, a fact key to proving that $\bar{\chi}$ is
indeed an equilibrium. In contrast, for GNEPs with shared constraints,
this assumption may be weakened because the intermediate points
$\chi_{k,f}$ and therefore the cluster points of the sequence generated
by the algorithm are guaranteed to be feasible. 
The complete proof is presented in Appendix
\ref{proof_exist_cont}.

Similar to discrete DSPI games, the convergence of
Algorithm 2  can be guaranteed for continuous-interdiction DSPI games with common
source-target pairs. The key fact that allows us to
prove this stronger result is that by dropping the dependence of 
the variables $y$ on the agents $f \in F$, any unilateral deviation in the shared variables
$y$ results in a solution that remains feasible in the other agents' optimization problems. 
The convergence result is formally stated below.

\begin{proposition}\label{Exist_Cont_Common}
 Consider applying Algorithm 2 to the DSPI problem under
 continuous interdiction with common source-target pairs, where each agent solves
 \eqref{dni_paths}. Let $\{ \chi_k \}$ be the sequence generated by
 the algorithm. If $\bar{\chi}$ is a cluster point of this sequence, then it
 also solves the DSPI problem. \hfill$\Box$
\end{proposition}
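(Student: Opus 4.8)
The plan is to exploit the fact that common source–target pairs turn the continuous DSPI game into a generalized Nash equilibrium problem with \emph{shared} (jointly convex) constraints, so that the convergence machinery of Facchinei et al.~\cite{facchinei_gpg} --- the same result underlying Proposition~\ref{Exist_Cont} --- applies in its stronger, cluster-point form. First I would work with the dual GNEP formulation \eqref{dualpotential} and drop the agent index on the potential variables: since $s^f = s$ and $t^f = t$ for all $f$, every agent's objective is the \emph{same} function $y_t - y_s$ of a single shared potential vector $y$, and every agent faces the \emph{same} coupling constraints $y_v - y_u \le d^0_{uv} + \sum_{g \in \mathcal{F}} x^g_{uv}$ together with the bounds $0 \le y_u \le M$. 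The only agent-private constraints are the budget $\sum_{(u,v)} c^f_{uv} x^f_{uv} \le b^f$ and $x^f \in X^f$, which involve $x^f$ alone. Hence the joint feasible set $C = \{(x^1,\dots,x^F,y)\}$ cut out by these constraints is a fixed, convex, compact set, each agent's feasible mapping is the section $\Xi^f(\chi^{-f}) = \{(x^f,y) : (x^f,x^{-f},y)\in C\}$, and since all agents share the objective $y_t - y_s$ the game is an identical-interest (hence potential) game --- exactly the structure \cite{facchinei_gpg} requires.

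The crux, and the step that distinguishes this result from Proposition~\ref{Exist_Cont}, is the feasibility-preservation property. I would verify that because the coupling constraints are shared and $y$ is a common variable, any best response of a single agent keeps the joint iterate inside $C$: if agent $f$ replaces $(x^f, y)$ by $(\hat x^f, \hat y)$ so that $(\hat x^f, x^{-f}, \hat y) \in C$, then for every other agent $g$, holding $x^g$ fixed, the relevant joint point is the \emph{same} element $(\hat x^f, x^{-f}, \hat y) \in C$, so $(x^g, \hat y)$ remains feasible in $g$'s regularized subproblem $\mathcal{R}(\chi^{-g}, \overline{\chi}^g)$. This is precisely what fails under non-shared constraints, where a change in $x^f$ shrinks the admissible $y^g$-region of another agent and an intermediate point may leave the feasible set. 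Consequently the entire sequence $\{\chi_k\}$ produced by Algorithm~2 stays in the compact set $C$, and every cluster point $\bar\chi$ is feasible for all agents.

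With shared structure and feasibility preservation in hand, I would invoke the convergence argument of \cite[Theorem 4.3]{facchinei_gpg} directly. Because feasibility of the intermediate points $\chi_{k,f}$ is now guaranteed, the weakened hypothesis suffices: one needs only a cluster point $\bar\chi$ rather than convergence of the full sequence, in contrast to the discussion following Proposition~\ref{Exist_Cont}. Passing to the limit along a convergent subsequence in the (variational) optimality condition characterizing each agent's regularized best response --- and using that the regularization term $\tau\norm{\chi^f-\overline{\chi}^f}^2$ vanishes at a fixed point together with continuity of $\theta^f$ and closedness of $\Xi^f$ --- yields $\theta^f(\bar\chi^f, \bar\chi^{-f}) \ge \theta^f(\chi^f, \bar\chi^{-f})$ for all $\chi^f \in \Xi^f(\bar\chi^{-f})$ and every $f$, i.e.\ $\bar\chi$ is a PGNE of the shared-$y$ GNEP. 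Finally, by the equivalence recorded in Proposition~\ref{GNEP_equivalence}, the $x$-component of $\bar\chi$ solves the primal DSPI game \eqref{dni_paths}.

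I expect the main obstacle to be making the feasibility-preservation argument airtight --- in particular, pinning down that the shared variable $y$ is updated consistently across the inner loop of Algorithm~2 so that the running joint iterate never leaves $C$, and carefully justifying the subsequential limit (continuity of each $\theta^f$ and closedness of the shared feasible mapping $\Xi^f$) so that the cluster point inherits the equilibrium inequalities. The remaining manipulations are routine given the linearity of all the underlying data.
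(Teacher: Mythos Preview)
Your proposal is correct and follows essentially the same approach the paper sketches: drop the agent index on the potential variables $y$ so that the coupling constraints become shared, observe that unilateral deviations then remain feasible for all other agents, and invoke the cluster-point form of the Facchinei et al.\ convergence result (Theorem~4.3 in~\cite{facchinei_gpg}) rather than the full-sequence version needed in Proposition~\ref{Exist_Cont}. Your additional observation that the common objective $y_t - y_s$ makes this an identical-interest (hence potential) game is a helpful way to see why the Facchinei et al.\ machinery applies cleanly, and your closing appeal to Proposition~\ref{GNEP_equivalence} to recover the primal equilibrium is the right finishing step.
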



\subsection{Numerical Results}
\label{sec:numerical}

We use the algorithms presented in the previous section to study several
instances of DSPI games. The decentralized algorithms were implemented
in MATLAB R2010a with CPLEX v12.2 as the optimization solver. The LCP
formulation for the DSPI game with continuous interdiction was solved
using the MATLAB interface for the complementarity solver PATH
\cite{ferris1999interfaces}. Computational experiments were carried out
on a desktop workstation with a quad-core Intel Core i7 processor and 16
GHz of memory running Windows 7.

In the implementation of the decentralized algorithm, for DSPI games
with discrete interdiction, we used Algorithm~\ref{GS_convex} with $\tau
= 0$. For DSPI games with continuous interdiction, we followed a
strategy of trying the ``na\"{i}ve'' Gauss-Seidel algorithm -- i.e.
Algorithm~\ref{GS_convex} with $\tau = 0$ -- first. If this version
failed to converge in 1000 outer iterations, we then set $\tau$ to a
strictly positive value and used the more expensive regularized
Gauss-Seidel algorithm. 

\subsubsection*{Computing Equilibria}

First, we applied the algorithm to Example~\ref{example1} in Section
\ref{uniqueness}, which is a DSPI game with continuous interdiction. 
In particular, the network is given in
Figure~\ref{fig1} and there are 2 agents: agent~1 has an adversary with
source node 1 and target node 5, and agent~2 has an adversary with
source node 1 and target node 6. Both agents have an interdiction budget
of 1. The initial arc lengths are 0, and the interdiction costs are
equal for both agents and are given as the arc labels in Figure
\ref{fig1}, with $\epsilon = 2$. We set the regularization parameter
$\tau = 0.01$. We were able to obtain a solution within an accuracy of
$10^{-6}$ in 3 outer iterations.

Furthermore, we obtained multiple Nash equilibria by varying the
starting point of the algorithm. All the equilibria obtained resulted in
the same shortest path lengths for each agent. Some of the equilibria
obtained are given in Table \ref{multi_eq_table}. The column $x_0$
represents the starting interdiction vector for each agent, 
the columns $x_N^1$ and $x_N^2$ give the equilibrium interdiction
vectors for agents 1 and 2, respectively. The seven components in the
vectors of $x_0$, $x_N^1$ and $x_N^2$ represent the interdiction actions
at each of the seven arcs in Figure \ref{fig1}, with the arcs being
ordered as follows: first, the top horizontal arcs $(1,2)$ and $(2,3)$,
then the vertical arcs $(1,4)$, $(2,5)$ and $(3,6)$, and finally the
bottom horizontal arcs $(4,5)$ and $(5,6)$. The remaining two columns in
Table \ref{multi_eq_table} , $p_1$ and $p_2$, give the shortest path
lengths for agents 1 and 2 respectively, at the equilibrium $\chi_N$.

\begin{table}[!h]
\caption{Multiple equilibria for the instance of the DSPI game in Example 2} \label{multi_eq_table}
\footnotesize
\centering
\begin{tabular}{ccccc}
\hline
$x_0$  & $x_N^1$ & $x_N^2$ & $p_1$ & $p_2$ \\
\hline
$(0, 0, 0, 0, 0, 0, 0)$ & $(0, 0, 0.5, 0.5, 0, 0, 0)$ & $(0, 0, 0.1667, 0.1667, 0.6667, 0)$ & $0.6667$ & $0.6667$ \\
$(0.2, 0.2, 0, 0, 0, 0, 0)$ & $(0, 0, 0.6, 0.4, 0, 0, 0)$ & $(0, 0, 0.0667, 0.2667, 0.6667, 0)$ & $0.6667$ & $0.6667$ \\
$( 0, 0, 0, 0, 0, 0.2, 0.2)$ & $(0, 0, 0.4, 0.6, 0, 0, 0)$ & $(0, 0, 0.2667, 0.0667, 0.6667, 0)$ & $0.6667$ & $0.6667$ \\
$(0, 0, 0, 0, 0, 0.3, 0.3)$ & $(0, 0, 0.35, 0.65, 0, 0, 0)$ & $(0, 0, 0.3167, 0.0167, 0.6667, 0)$ & 0.6667 & 0.6667 \\
$(0.3, 0.3, 0, 0, 0, 0, 0)$ & $(0, 0, 0.65, 0.35, 0, 0, 0)$ & $(0, 0, 0.0167, 0.3167, 0.6667, 0)$ & 0.6667 & 0.6667 \\
$(0.25, 0.25, 0, 0, 0, 0, 0)$ & $(0, 0, 0.625, 0.375, 0, 0, 0)$ & $(0, 0, 0.0417, 0.2917, 0.6667, 0)$ & 0.6667 & 0.6667 \\
$(0, 0, 0, 0, 0, 0.25, 0.25)$ & $(0, 0, 0.375, 0.625, 0, 0, 0)$ & $(0, 0, 0.2917, 0.0417, 0.6667, 0)$ & 0.6667 & 0.6667 \\
$(0, 0, 0, 0, 0, 0.15, 0.15)$ & $(0, 0, 0.425, 0.575, 0, 0, 0)$ & $(0, 0, 0.2417, 0.0917, 0.6667, 0)$ & 0.6667 & 0.6667 \\
$(0.15, 0.15, 0, 0, 0, 0, 0)$ & $(0, 0, 0.575, 0.425, 0, 0, 0)$ & $(0, 0, 0.0917, 0.2417, 0.6667, 0)$ & 0.6667 & 0.6667 \\
\hline
\end{tabular}
\end{table}

\begin{figure}[!h]
\centering
\includegraphics[width=0.9\textwidth]{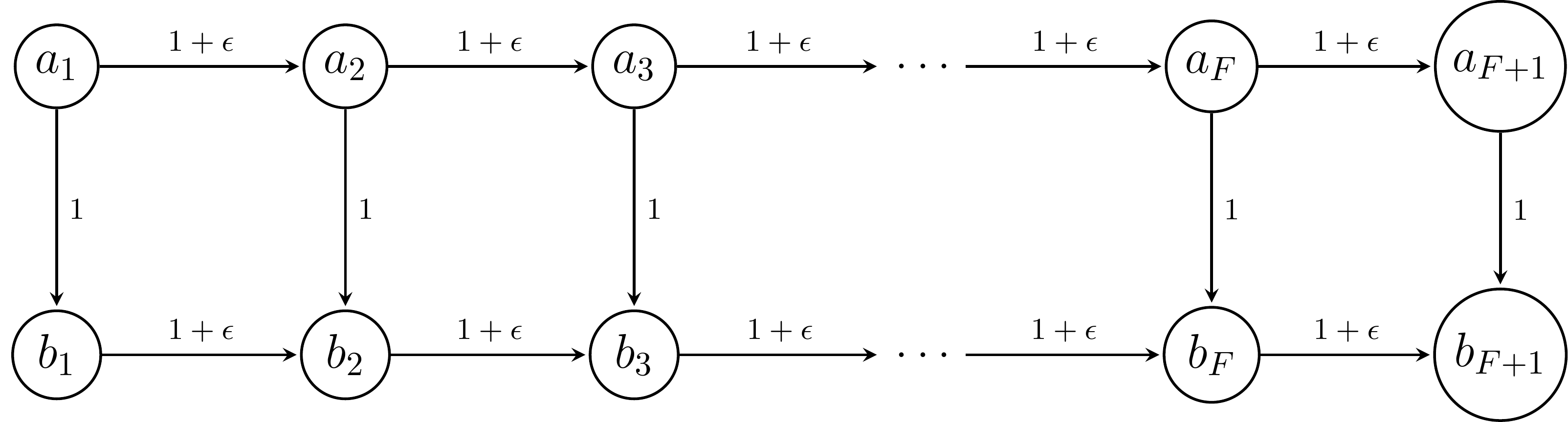}
\caption{Network structure for DSPI Example \ref{example4}} \label{dspi_graph1_fig}
\end{figure}

\vspace{0.2 in}
\begin{example} 
\label{example4} \rm{To test the algorithm on larger-scale problems, we
expanded the instance in Example \ref{example1} to larger network sizes
and numbers of agents. For $F$ agents, the graph contains $2(F+1)$
vertices with the edges as shown in Figure~\ref{dspi_graph1_fig}. The
source vertex for all agents is $a_1$. The target vertex for a given
agent $f$ is $b_{f+1}$. The initial arc lengths are all assumed to be
zero. The interdiction costs are the same for all the agents and are
given as the arc labels in Figure~\ref{dspi_graph1_fig}. All the agents
have an interdiction budget of 1. The cost parameter $\epsilon$ is
chosen as 2. For discrete interdiction on these graphs, the arc extensions
are assumed to be length 1.

The running time and iterations required to compute equilibria for these
instances are summarized in Table~\ref{dspi_ex2}. The first four columns
in the table give the number of outer iterations and runtime for
Algorithm~\ref{GS_convex} over these instances with continuous
interdiction. The results indicate that the running time for the
centralized Lemke's method increases monotonically with the problem
size. However, the running time for the decentralized method depends not
just on the problem size but also on the number of outer iterations. In
general, there is no correlation between these two parameters. Indeed
the algorithm is observed to converge in relatively few iterations even
for some large problem instances. This is in stark contrast to the rapid
increase in running time observed for the LCP approach as problem size
increases.

It must be noted that the order in which the individual agent problems
are solved in the Gauss-Seidel algorithm plays an important role. Indeed
it was found that the algorithm failed to converge for certain orderings
of the agents, but succeeded in finding equilibria quickly for the same
instance with other orderings. For instance, for a network of size
25, solving the agent problems in their natural order $\{1, 2, \ldots,
25 \}$ resulted in the failure of the ``na\"{i}ve'' version of the
algorithm to converge even after 1000 outer iterations. However, with a
randomized agent order, the algorithm converged in as few as 13
iterations. It is encouraging to note that for the same agent order that
resulted in the failure of the naive version, the regularized method
converged to a GNE within 394 outer-iterations with a runtime of 28
wall-clock seconds.}

\begin{table}[!h]
\caption{Number of iterations and running times for DSPI Example 3.} \label{dspi_ex2}
\centering
\begin{tabular}{c|cc:c|cc}
\hline
& \multicolumn{3}{c|}{Continuous Interdiction}  & \multicolumn{2}{c}{Discrete
Interdiction} \\
\hline
& \multicolumn{2}{c:}{Decentralized} & LCP & \multicolumn{2}{c}{Decentralized} \\   
\# Agents& \# Iters & Runtime (s) & Runtime (s)  & \# Iters & Runtime (s)\\
\hline
5 & 3 & 0.0205 & 0.0290 & 5     & 0.1776 \\ 
10 & 5 & 0.0290 & 0.1833 & 3     & 0.1627 \\ 
15 & 11 & 0.1103 & 0.7534 & 3     & 0.2419 \\ 
20 & 5 & 0.0723 & 2.1106 & 3     & 0.3164 \\ 
25 & 13 & 0.2609 & 4.8167 & 3     & 0.4005 \\ 
30 & 15 & 0.4070 & 10.2256 & 3     & 0.5155 \\ 
35 & 10 & 0.3605 & 17.7387 & 3     & 0.5948 \\ 
40 & 41 & 1.7485 & 30.2382 & 3     & 0.7387 \\ 
45 & 12 & 0.6601 & 48.6280 & 3     & 0.8794 \\ 
50 & 12 & 0.7981 & 75.0420 & 3     & 1.0385 \\
\hline
\end{tabular}
\end{table}

\end{example}

\subsubsection*{Computation of Efficiency Losses}

Using the decentralized algorithm and its potential to find multiple
equilibria by starting at different points, we empirically study the
efficiency loss of decentralized interdiction strategies in DSPI games.
We focus first on Example~\ref{example4}, with the underlying network
represented in Figure~\ref{dspi_graph1_fig}.  Before computing the
average efficiency losses, we first
establish a theoretical bound on the worst-case price of
anarchy, for the purpose of comparison.

We start with the specific instance as depicted in Figure \ref{dspi_graph1_fig}. 
Recall that there are $F$ agents and the source-target pair for agent
$f$ is $(a_1, b_{f+1})$. Note that all paths for all agents contain
either the arc $(a_1, a_2)$ or the arc $(a_1, b_1)$. Then one feasible
solution to the centralized problem is for each agent to interdict both
these arcs by $1 / (2+\epsilon)$ for a total cost of 1. In this case the
length of both arcs become $F / (2+\epsilon)$, giving a shortest path
length of $F / (2 + \epsilon)$ for each agent. Note that this is not an
equilibrium solution as agent 1 can deviate unilaterally to interdict
arcs $(a_1, b_1)$ and $(a_2, b_2)$ by $1/2$ to obtain a shortest path
length of $(F+\epsilon / 2)/(2 + \epsilon)$. 

A Nash equilibrium to this problem is given by the following solution.
Agent $f$ interdicts the vertical arcs $(a_1, b_1), \ldots, (a_f, b_f)$ by
$1 / (f(f+1))$ and the arc $(a_{f+1}, b_{f+1})$ by $f / (f+1)$.
Each agent then has a shortest path length of $F / (F+1)$. Note that
all the $s^f$-$t^f$ paths are of equal length for every agent. Therefore
diverting any of the budget to any vertical arcs will result in unequal
path lengths and a shorter shortest path for any agent. Obviously,
diverting the budget to interdict any of the horizontal arcs is cost
inefficient because of their higher interdiction cost at $1 + \epsilon$.
Thus no agent has an incentive to deviate from this solution.

We now have a feasible solution to the centralized problem that has an
objective value of $F / (2 + \epsilon)$ for each agent, and a Nash
equilibrium that has an objective value of $F / (F+1)$ for each agent.
Therefore, by its definition in \eqref{wpoa}, the worst-case price of anarchy for the DSPI game depicted in
Figure \ref{dspi_graph1_fig} must be at least $(F+1) / (2 + \epsilon)$.

Using the regularized Gauss-Seidel algorithm we also compute lower
bounds on the worst-case price of anarchy and average efficiency losses for
the same network topology with varying number of agents. 
The instances we consider are obtained by varying $\epsilon$
uniformly in the range of $(1.5, 10)$. For the purpose of comparison,
the numerical results are plotted in Figure \ref{ex12} below. Note that
the average-case efficiency loss is much lower than the worst-case price
of anarchy. For the particular graph structure under consideration, we
observe that the average efficiency loss grows at a much lower rate than
the worst-case efficiency loss. However this observation cannot be
generalized to other graph structures and such patterns may only be
discernible by applying a decentralized computational framework as the
one we presented. 

\begin{figure}[!h] 
\includegraphics[width=\textwidth]{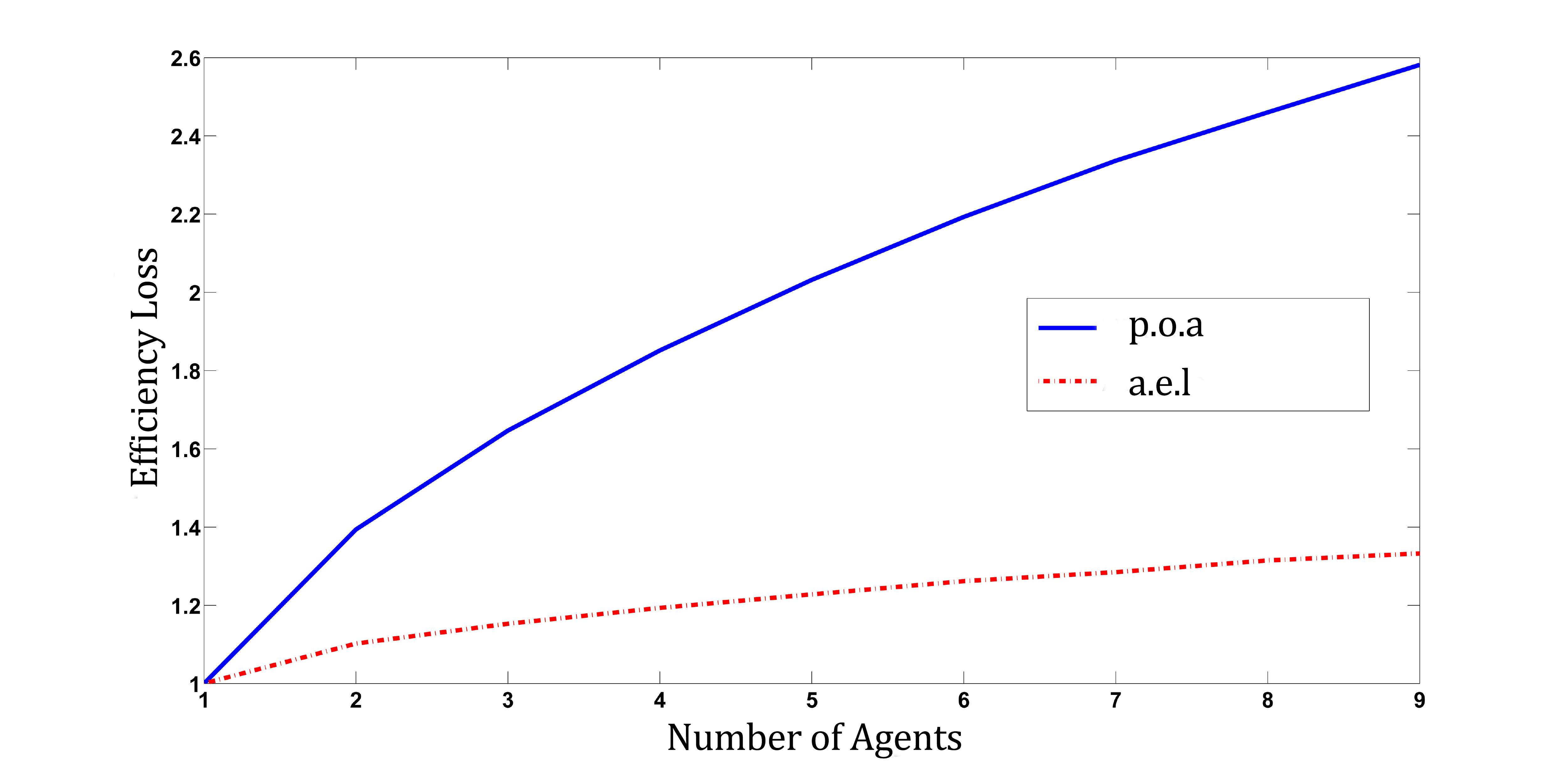}
\caption{Efficiency loss with respect to the number of agents.}
 \label{ex12}
\end{figure}

\begin{example} \label{example5} 
\rm{We further tested the decentralized algorithms for continuous
interdiction on random graphs to study average efficiency losses of
equilibria of DSPI games on networks with different topologies. For the
random graphs we generated, the input parameters include the number of
vertices and the density of a graph, which is the number of arcs divided
by the maximum possible number of arcs. The number of agents was chosen
randomly from the interval $(0, |V|/2)$, and one such number is chosen
per vertex set size. Source-target pairs were chosen at random for each
interdictor. Fixing the vertex set, we populated the arc set by
successively generating source-target paths for the agents until the
desired density was reached. We thus ensured connectivity between the
source-target pairs for each agent. Costs, initial arc lengths and
interdiction budgets were chosen from continuous uniform distributions.
Arc interdiction costs were assigned uniformly in the range $[1, 5]$.
The budget for each agent $f$ was chosen uniformly from the interval
$[b^f/10, b^f/2]$, where $b^f = \sum_{a \in A} c_a^f$. The initial
length of each arc was chosen uniformly from $[1,5]$. 
	
For each combination of vertex set size, the number of
agents, and graph density, we generated 25 random instances by drawing
values from the uniform distributions described above for the various
network parameters. For each instance, we used 10 different random permutations
of the agents to run the decentralized algorithms in an attempt to
compute multiple equilibria. The lower bound on the price of anarchy for
the game was computed as the worst case efficiency loss over these 25
instances. The average efficiency loss over these instances was also
computed. The results are summarized in Table \ref{dspi_ex22}. Our
experiments indicate that the average efficiency loss and the worst-case
price of anarchy tend to grow as the number of vertices and number of
agents increases; on the other hand, these measures of efficiency loss
sometimes do not appear to be monotonically increasing or decreasing
with respect to the density of the underlying network. 

\begin{table}[h]
\footnotesize
\caption{DSPI Continuous Interdiction - Random Graphs} \label{dspi_ex22}
\begin{tabular*}{\textwidth}{@{\extracolsep{\fill}}ccccccc}
\hline
\# Vertices  & \# Agents & Density & Avg. Run Time (s) & \# Avg Iters. & $a.e.l$ & $p.o.a$  \\
\hline
    5     & 3     & 0.25  & 0.0037 & 3 & 1.3133 & 1.5561 \\
    5     & 3     & 0.5   & 0.0038 & 3  & 1.3265 & 1.9529 \\
    5     & 3     & 0.75  & 0.0040 & 3  & 1.5099 & 2.3829 \\
    10    & 3     & 0.25  & 0.0065 & 4 & 1.5366 & 2.2078 \\
    10    & 3     & 0.5   & 0.0176 & 11 & 1.4538 & 2.3114 \\
    10    & 3     & 0.75  & 0.0132 & 8 & 1.4273 & 2.1342 \\
    15    & 4     & 0.25  & 0.0263 & 11 & 1.7091 & 2.9246 \\
    15    & 4     & 0.5   & 0.0939 & 33 & 1.7524 & 2.7904 \\
    15    & 4     & 0.75  & 0.1267 & 42 & 1.5695 & 2.1425 \\
    20    & 5     & 0.25  & 0.1269 & 34 & 2.1907 & 3.2885 \\
    20    & 5     & 0.5   & 0.2087 & 43 & 1.8523 & 2.7906 \\
    20    & 5     & 0.75  & 0.5416 & 100 & 1.7967 & 2.3782 \\
    25    & 7     & 0.25  & 0.7167 & 105 & 2.5631 & 4.8788 \\
    25    & 7     & 0.5   & 1.9564 & 207 & 2.3022 & 5.5794 \\
    25    & 7     & 0.75  & 1.8476 & 158 & 1.9884 & 2.4423 \\

\hline
\end{tabular*}
\end{table}
}
\end{example}

\section{Conclusions and Future Work}

In this work, we introduced decentralized network interdiction (DNI)
games and gave formulations for one such class of games -- decentralized
shortest path interdiction (DSPI) games.  We analyzed the theoretical
properties of DSPI games: in particular, we gave conditions for the
existence of equilibria and examples where multiple equilibria exist.
Specifically, we proved the existence of equilibria for general DSPI games under
continuous interdiction. On the other hand, for the discrete counterpart, we
provide a counterexample for existence. However, for the subclass of problems
with common source-target pairs, we are able to provide an existence guarantee.  

We also showed that the DSPI game under continuous interdiction is equivalent to
a linear complementarity problem, which can be solved by Lemke's algorithm. This
constitutes a convergent centralized method to solve such problems. We also
presented decentralized heuristic algorithms to solve DSPI games under both
continuous and discrete interdiction.  Finally, we used these algorithms to
empirically evaluate the worst case and average efficiency loss of DSPI games.

There are other classes of network interdiction games that can be
studied using the same framework we have developed, where the 
agents' obstruction functions are related to the maximum flow or minimum
cost flow in the network. 
Establishing theoretical results and studying the applicability of the
decentralized algorithms to other classes of decentralized network
interdiction games are natural and interesting extensions of this work.

In our study of DSPI games, we also made the assumption
that the games have complete information structure; that is, the normal form of the
game -- the set of agents, agents' feasible action spaces, and their
objective functions -- is assumed to be common knowledge to all agents.
In addition, we made the implicit assumption that all input data are
deterministic. However, data uncertainty and lack of observability of
other agents' preferences or actions are prevalent in real-world
situations. For such settings, we need to extend our work to accommodate
games with exogenous uncertainties and incomplete information.

One might also be interested in designing interventions to reduce the
loss of efficiency resulting from decentralized control.
This leads to the topic of mechanism design. 
Such a line of work also defines a very important and interesting future
research direction.
 

\section*{Acknowledgement}
This work was partially supported by the Air Force Office of Scientific Research (AFOSR) under grant FA9550-12-1-0275.

\appendix{}

\section{Proof of Proposition~\ref{Exist_Cont}} 
\label{proof_exist_cont}

\vspace{5 pt}
\noindent Since $\chi_k \rightarrow \bar{\chi}$ we must have $\chi_k^f \to \bar{\chi}^f$ and  
  \begin{equation} \label{interm_conv}
    \lim_{k \to \infty} \norm{\chi^f_{k+1} - \chi^f_k} = 0.
  \end{equation}
  By construction of $\chi_{k,f}$, \eqref{interm_conv} implies that
  \begin{equation} 
    \label{interm_conv1} \lim_{k \to \infty} \chi_{k,f} = \bar{\chi}. 
  \end{equation}
  By Step 1 of Algorithm 2, $\chi_{k+1}^f \in
  \Xi^f(\chi_{k,f}^{-f})$. Since $\chi_{k+1}^f \rightarrow
  \bar{\chi}^f$, $\chi_k^{-f} \rightarrow \bar{\chi}^{-f}$, and
  $\Xi^f(\chi_{k,f}^{-f})$ is defined by linear inequalities parametrized
  by $\chi_{k,f}^{-f}$, it is straightforward to see by continuity
  arguments that $\bar{\chi}^f \in \Xi^f (\bar{\chi}^{-f})$. In other
  words, $\bar{\chi}$ is feasible for every agent's optimization problem
  \eqref{gnep1}. 

  We claim that for each agent $f \in \mathcal{F}$
  \begin{equation*} 
    \theta^f(\bar{\chi}^f, \bar{\chi}^{-f}) \geq \theta^f(\chi^f, \bar{\chi}^{-f}), \quad \forall \ \chi^f \in \Xi^f(\bar{\chi}^{-f}). 
  \end{equation*}
  For the purposes of establishing a contradiction, let there be an
  agent $\bar{f}$ and a vector $\bar{\xi}^{\bar{f}} \in
  \Xi^{\bar{f}}(\bar{\chi}^{-\bar{f}})$ such that
  \begin{equation*} 
    \theta^{\bar{f}}(\bar{\chi}^{\bar{f}}, \bar{\chi}^{-\bar{f}}) < \theta^{\bar{f}}(\bar{\xi}^{\bar{f}}, \bar{\chi}^{-\bar{f}}). 
  \end{equation*}
  Using the linearity of the functions that define the set valued mapping
  $\Xi^{\bar{f}}(\cdot)$ we can show its inner semicontinuity relative to its
  domain (cf.~\cite{rockafellar_wetts} Chapter 5). Because
  $\bar{\chi}^{-\bar{f}} \in \text{dom}(\Xi^{\bar{f}}(\cdot))$, we then
  have 
  \begin{equation} 
    \label{innersc} 
    \underset{\xi^{-\bar{f}} \to \bar{\chi}^{-\bar{f}}}{\text{lim inf}} \ \
    \Xi(\xi^{-\bar{f}}) \supseteq \Xi(\bar{\chi}^{-\bar{f}}), 
  \end{equation} 
  where the limit in \eqref{innersc} is given by the following: 
  \begin{equation} 
    \label{innersc2} 
    \underset{\xi^{-\bar{f}} \to \bar{\chi}^{-\bar{f}}}{\text{lim inf}} \ \ \Xi(\xi^{-\bar{f}}) = \displaystyle \left\{ u^{\bar{f}} \ \vert  \ \forall \chi_k^{-\bar{f}} \to \bar{\chi}^{-\bar{f}}, \exists u^{\bar{f}}_k \to u \text{ with } u^{\bar{f}}_k \in \Xi^{\bar{f}}(\chi_k^{-\bar{f}}) \right\}. 
  \end{equation}
  Since $\bar{\xi}^{\bar{f}} \in \Xi^{\bar{f}}(\bar{\chi}^{-\bar{f}})$,
  equations~\eqref{interm_conv1}, \eqref{innersc} and \eqref{innersc2}
  allow us to construct a sequence $\xi_k^{\bar{f}} \in
  \Xi^{\bar{f}}(\chi_{k,f}^{-\bar{f}})$ such that $\xi_k^{\bar{f}} \to
  \bar{\xi}^{\bar{f}}$ as $k \rightarrow \infty$.

  Let $d^{\bar{f}} = (\bar{\xi}^{\bar{f}} - \bar{\chi}^{\bar{f}})$. Then
  by the subdifferentiality inequality for concave functions we must have
  \begin{equation} \label{subdiff} 
    \theta'^{\bar{f}}(\bar{\chi}^{\bar{f}}, \bar{\chi}^{-\bar{f}}; d^{\bar{f}}) > 0. 
  \end{equation}
  Denote by $\Phi^f$ the regularized objective function
  for agent $f$'s subproblem. In other words,
  \begin{equation*} 
    \Phi^f(\chi^f, \chi^{-f}, z) = \theta^f(\chi^f, \chi^{-f}) - \tau \norm{\chi^f - z}^2. 
  \end{equation*}
  We then have
  \begin{equation*} 
    \Phi'^f(\chi^f, \chi^{-f}, z; d^f) = \theta'^f(\chi^f, \chi^{-f}; d^f) - 2 \tau (\chi^f - z)^T d^f. 
  \end{equation*}
  \sloppypar Note that $\chi_{k+1}^{\bar{f}}$ is obtained by solving the problem
  $\mathcal{R}(\chi_{k,\bar{f}}^{-\bar{f}}, \chi_k^{\bar{f}})$. In other words,
  $\chi_{k+1}^{\bar{f}}$ maximizes $\Phi^{\bar{f}}(\xi_{k,\bar{f}}^{\bar{f}}, \chi_{k,\bar{f}}^{-\bar{f}}, \chi_k^{\bar{f}})$ over the set $\Xi^{\bar{f}}(\chi_{k,\bar{f}}^{-\bar{f}})$. Since this is a concave maximization problem, we then apply first order optimality conditions to obtain the following.
  \begin{equation}
    \begin{aligned}
      \Phi'^{\bar{f}}(\chi^{\bar{f}}_{k+1}, \chi_{k,\bar{f}}^{-\bar{f}}, \chi^{\bar{f}}_k; (\xi^{\bar{f}}_k - \chi^{\bar{f}}_{k+1})) & = \theta'^{\bar{f}}(\chi^{\bar{f}}_{k+1}, \chi_{k,\bar{f}}^{-\bar{f}}, \chi^{\bar{f}}_k ; (\xi^{\bar{f}}_k - \chi^{\bar{f}}_{k+1})) \\
      & \qquad +\ 2 \tau (\chi^{\bar{f}}_{k+1} - \chi^{-\bar{f}}_k ) (\xi^{\bar{f}}_k - \chi^{\bar{f}}_{k+1}) \\
      & \leq 0.
    \end{aligned}
  \end{equation}
  Passing to the limit $k \to \infty, \ k \in K$ and using \eqref{interm_conv1} we obtain
  \begin{equation*} 
    0 \geq \theta'^{\bar{f}}(\bar{\chi}^f, \bar{\chi}^{-f}; (\bar{\xi}^f - \bar{\chi}^f))  
  \end{equation*}
  which contradicts \eqref{subdiff}.


\bibliographystyle{plain}
\bibliography{nrl_refs}

\begin{thebibliography}{10}

\bibitem{albers2006nash}
S.~Albers, S.~Eilts, E.~Even-Dar, Y.~Mansour, and L.~Roditty.
\newblock On {N}ash equilibria for a network creation game.
\newblock In {\em {Proceedings of the 17th Annual ACM-SIAM Symposium on
  Discrete Algorithms}}, pages 89--98. ACM, 2006.

\bibitem{anshelevich2008price}
E.~Anshelevich, A.~Dasgupta, J.~Kleinberg, E.~Tardos, T.~Wexler, and
  T.~Roughgarden.
\newblock The price of stability for network design with fair cost allocation.
\newblock {\em SIAM Journal on Computing}, 38(4):1602--1623, 2008.

\bibitem{assimakopoulos1987network}
N.~Assimakopoulos.
\newblock A network interdiction model for hospital infection control.
\newblock {\em Computers in Biology and Medicine}, 17(6):413--422, 1987.

\bibitem{awerbuch2005price}
B.~Awerbuch, Y.~Azar, and A.~Epstein.
\newblock The price of routing unsplittable flow.
\newblock In {\em {Proceedings of the 37th Annual ACM Symposium on Theory of
  Computing}}, pages 57--66. ACM, 2005.

\bibitem{bazaraa2011linear}
M.S. Bazaraa, J.J. Jarvis, and H.D. Sherali.
\newblock {\em {Linear Programming and Network Flows}}.
\newblock Wiley, 2011.

\bibitem{boyd2004convex}
S.~Boyd and L.~Vandenberghe.
\newblock {\em Convex optimization}.
\newblock Cambridge university press, 2004.

\bibitem{christodoulou2005price1}
G.~Christodoulou and E.~Koutsoupias.
\newblock On the price of anarchy and stability of correlated equilibria of
  linear congestion games.
\newblock In {\em Algorithms--ESA 2005}, pages 59--70. Springer, 2005.

\bibitem{church_crit_infra_04}
R.L. Church, M.P. Scaparra, and R.S. Middleton.
\newblock Identifying critical infrastructure: The median and covering facility
  interdiction problems.
\newblock {\em Annals of the Association of American Geographers},
  94(3):491--502, 2004.

\bibitem{cole2006much}
R.~Cole, Y.~Dodis, and T.~Roughgarden.
\newblock How much can taxes help selfish routing?
\newblock {\em Journal of Computer and System Sciences}, 72(3):444--467, 2006.

\bibitem{corbo2005price}
J.~Corbo and D.~Parkes.
\newblock The price of selfish behavior in bilateral network formation.
\newblock In {\em Proceedings of the 24th Annual ACM Symposium on Principles of
  Distributed Computing}, pages 99--107. ACM, 2005.

\bibitem{correa2004selfish}
J.R. Correa, A.S. Schulz, and N.E. Stier-Moses.
\newblock Selfish routing in capacitated networks.
\newblock {\em Mathematics of Operations Research}, 29(4):961--976, 2004.

\bibitem{cottle2009linear}
R.W. Cottle, J.S. Pang, and R.E. Stone.
\newblock {\em The linear complementarity problem}, volume~60.
\newblock Siam, 2009.

\bibitem{debreu1952social}
G.~Debreu.
\newblock A social equilibrium existence theorem.
\newblock {\em Proceedings of the National Academy of Sciences of the United
  States of America}, 38(10):886, 1952.

\bibitem{dreves4}
A.~Dreves, A.~von Heusinger, C.~Kanzow, and M.~Fukushima.
\newblock A globalized {N}ewton method for the computation of normalized {N}ash
  equilibria.
\newblock {\em Journal of Global Optimization}, 56(2):327--340, 2013.

\bibitem{fabrikant2003network}
A.~Fabrikant, A.~Luthra, E.~Maneva, C.H. Papadimitriou, and S.~Shenker.
\newblock On a network creation game.
\newblock In {\em Proceedings of the 22nd Annual ACM Symposium on Principles of
  Distributed Computing}, pages 347--351. ACM, 2003.

\bibitem{Facchinei2007159}
F.~Facchinei, A.~Fischer, and V.~Piccialli.
\newblock On generalized {N}ash games and variational inequalities.
\newblock {\em Operations Research Letters}, 35(2):159 -- 164, 2007.

\bibitem{gnep_survey}
F.~Facchinei and C.~Kanzow.
\newblock Generalized {N}ash equilibrium problems.
\newblock {\em 4OR}, 5(3):173--210, 2007.

\bibitem{doi:10.1137/090749499}
F.~Facchinei and C.~Kanzow.
\newblock Penalty methods for the solution of generalized {N}ash equilibrium
  problems.
\newblock {\em SIAM Journal on Optimization}, 20(5):2228--2253, 2010.

\bibitem{facchinei_gpg}
F.~Facchinei, V.~Piccialli, and M.~Sciandrone.
\newblock Decomposition algorithms for generalized potential games.
\newblock {\em Computational Optimization and Applications}, 50:237--262, 2011.

\bibitem{ferris1999interfaces}
Michael~C Ferris and Todd~S Munson.
\newblock Interfaces to path 3.0: Design, implementation and usage.
\newblock {\em Computational Optimization and Applications}, 12(1-3):207--227,
  1999.

\bibitem{fukushima2011restricted}
M.~Fukushima.
\newblock Restricted generalized {N}ash equilibria and controlled penalty
  algorithm.
\newblock {\em Computational Management Science}, 8(3):201--218, 2011.

\bibitem{ghare1971optimal}
P.M. Ghare, D.C. Montgomery, and W.C. Turner.
\newblock Optimal interdiction policy for a flow network.
\newblock {\em Naval Research Logistics Quarterly}, 18(1):37--45, 1971.

\bibitem{Harker199181}
P.T. Harker.
\newblock Generalized {N}ash games and quasi-variational inequalities.
\newblock {\em European Journal of Operational Research}, 54(1):81 -- 94, 1991.

\bibitem{israeli_wood_02}
E.~Israeli and R.K. Wood.
\newblock Shortest-path network interdiction.
\newblock {\em Networks}, 40(2):97--111, 2002.

\bibitem{johari2004efficiency}
R.~Johari and J.N. Tsitsiklis.
\newblock Efficiency loss in a network resource allocation game.
\newblock {\em Mathematics of Operations Research}, 29(3):407--435, 2004.

\bibitem{johari2009efficiency}
R.~Johari and J.N. Tsitsiklis.
\newblock Efficiency of scalar-parameterized mechanisms.
\newblock {\em Operations Research}, 57(4):823--839, 2009.

\bibitem{korte2012combinatorial}
B.B.H. Korte and J.~Vygen.
\newblock {\em {Combinatorial Optimization: Theory and Algorithms}}.
\newblock Algorithms and combinatorics. Springer-Verlag Berlin Heidelberg,
  2012.

\bibitem{koutsoupias1999worst}
E.~Koutsoupias and C.~Papadimitriou.
\newblock Worst-case equilibria.
\newblock In {\em Proceedings of the 16th Annual Symposium on Theoretical
  Aspects of Computer Science (STACS)}, pages 404--413. Springer, 1999.

\bibitem{krawczyk2000relaxation1}
J.B. Krawczyk and S.~Uryasev.
\newblock Relaxation algorithms to find {N}ash equilibria with economic
  applications.
\newblock {\em Environmental Modeling \& Assessment}, 5(1):63--73, 2000.

\bibitem{morton2007models}
D.P. Morton, F.~Pan, and K.J. Saeger.
\newblock Models for nuclear smuggling interdiction.
\newblock {\em IIE Transactions}, 39(1):3--14, 2007.

\bibitem{Myerson78}
R.B. Myerson.
\newblock Refinements of the {N}ash equilibrium concept.
\newblock {\em International Journal of Game Theory}, 7(2):73--80, 1978.

\bibitem{nabetani2011parametrized}
K.~Nabetani, P.~Tseng, and M.~Fukushima.
\newblock Parametrized variational inequality approaches to generalized {N}ash
  equilibrium problems with shared constraints.
\newblock {\em Computational Optimization and Applications}, 48(3):423--452,
  2011.

\bibitem{pigou1924economics}
A.C. Pigou.
\newblock {\em The Economics of Welfare}.
\newblock Transaction Publishers, 1924.

\bibitem{rockafellar_wetts}
R.T. Rockafellar and R.~Wets.
\newblock {\em {Variational Analysis: Grundlehren Der Mathematischen
  Wissenschaften}}, volume 317.
\newblock Springer Verlag, 1998.

\bibitem{rosen1965existence}
J.B. Rosen.
\newblock Existence and uniqueness of equilibrium points for concave n-person
  games.
\newblock {\em Econometrica: Journal of the Econometric Society}, pages
  520--534, 1965.

\bibitem{rosenthal1973class}
R.W. Rosenthal.
\newblock A class of games possessing pure-strategy {N}ash equilibria.
\newblock {\em {International Journal of Game Theory}}, 2(1):65--67, 1973.

\bibitem{roughgarden2002bad}
T.~Roughgarden and E.~Tardos.
\newblock How bad is selfish routing?
\newblock {\em Journal of the ACM}, 49(2):236--259, 2002.

\bibitem{schrijver1998theory}
A.~Schrijver.
\newblock {\em {Theory of Linear and Integer Programming}}.
\newblock {Wiley Series in Discrete Mathematics \& Optimization}. John Wiley \&
  Sons, 1998.

\bibitem{smith_liu_algo_2008}
J.~C. Smith and C.~Lim.
\newblock Algorithms for network interdiction and fortification games.
\newblock In A.~Chinchuluun, P.M. Pardalos, A.~Migdalas, and L.~Pitsoulis,
  editors, {\em Pareto Optimality, Game Theory And Equilibria}, volume~17 of
  {\em Springer Optimization and Its Applications}, pages 609--644. Springer
  New York, 2008.

\bibitem{suri2007selfish}
S.~Suri, C.D. T{\'o}th, and Y.~Zhou.
\newblock Selfish load balancing and atomic congestion games.
\newblock {\em Algorithmica}, 47(1):79--96, 2007.

\bibitem{thompson2009computational}
D.R.M. Thompson and K.~Leyton-Brown.
\newblock Computational analysis of perfect-information position auctions.
\newblock In {\em Proceedings of the 10th ACM Conference on Electronic
  Commerce}, pages 51--60. ACM, 2009.

\bibitem{uryas1994relaxation}
S.~Uryasev and R.Y. Rubinstein.
\newblock On relaxation algorithms in computation of noncooperative equilibria.
\newblock {\em IEEE Transactions on Automatic Control}, 39(6):1263--1267, 1994.

\bibitem{vetta2002nash}
A.~Vetta.
\newblock {N}ash equilibria in competitive societies, with applications to
  facility location, traffic routing and auctions.
\newblock In {\em Proceedings of the 43rd Annual IEEE Symposium on Foundations
  of Computer Science}, pages 416--425. IEEE, 2002.

\bibitem{wardrop1952road}
J.G. Wardrop.
\newblock Some theoretical aspects of road traffic research.
\newblock In {\em ICE Proceedings: Engineering Divisions}, volume~1, pages
  325--362. Thomas Telford, 1952.

\bibitem{wood1993deterministic}
R.K. Wood.
\newblock Deterministic network interdiction.
\newblock {\em Mathematical and Computer Modelling}, 17(2):1--18, 1993.

\bibitem{zhang2010some}
J.~Zhang, B.~Qu, and N.~Xiu.
\newblock Some projection-like methods for the generalized {N}ash equilibria.
\newblock {\em Computational Optimization and Applications}, 45(1):89--109,
  2010.

\end{thebibliography}




\end{document}